\newtheorem{thm}{Theorem}[section]
\newtheorem{lem}[thm]{Lemma}
\newtheorem{prop}[thm]{Proposition}
\newtheorem{cor}[thm]{Corollary}
\theoremstyle{definition}
\theoremstyle{remark}
\newtheorem{rmk}[thm]{Remark}
\renewcommand{\Re}{\operatorname*{Re}}                             
\renewcommand{\Im}{\operatorname*{Im}}                             
\newcommand{\D}{\ensuremath{\,\mathrm{d}}}						   
\newcommand{\Mspacer}{\hspace{0.5mm}}                              
\newcommand{\M}[3]{#1_{#2\Mspacer#3}}                              
\renewcommand{\geq}{\geqslant}                                     
\renewcommand{\leq}{\leqslant}                                     
\renewcommand{\epsilon}{\varepsilon}							   
\newcommand{\BE}{\begin{equation}}                                 
\newcommand{\EE}{\end{equation}}                                   
\newcommand{\be}{\begin{equation}}                                 
\newcommand{\ee}{\end{equation}}                                   
\newcommand{\BES}{\begin{equation*}}                               
\newcommand{\EES}{\end{equation*}}                                 
\newcommand{\BP}{\begin{pmatrix}}                                  
\newcommand{\EP}{\end{pmatrix}}                                    
\newcommand{\N}{\mathbb{N}}                                        
\newcommand{\R}{\mathbb{R}}                                        
\newcommand{\C}{\mathbb{C}}                                        
\newcommand{\la}{\lambda}
\newcommand\re{{\rm e}}
	\newcommand{\biggg}{\bBigg@{3}}
	\newcommand{\Biggg}{\bBigg@{4}}
	\newcommand{\bigggg}{\bBigg@{5}}
	\newcommand{\Bigggg}{\bBigg@{6}}
\newcommand{\superscript}[1]{\ensuremath{^{\textrm{#1}}}}
\newcommand{\Thns}[0]{\superscript{th}}
\newcommand{\Th}[0]{\Thns~}
\def\clap#1{\hbox to 0pt{\hss#1\hss}}
\numberwithin{equation}{section}
\title{The diffusion equation with nonlocal data}
\author{P. D. Miller$^1$ and  D. A. Smith$^{2}$\\
\footnotesize 1. Department of Mathematics, University of Michigan, Ann Arbor, MI, USA \\
\footnotesize 2. Yale-NUS College, Singapore\textup{: \texttt{dave.smith@yale-nus.edu.sg}}
}
\begin{document}
\maketitle

\abstract{
	We study the diffusion (or heat) equation on a finite 1-dimensional spatial domain, but we replace one of the boundary conditions with a ``nonlocal condition'', through which we specify a weighted average of the solution over the spatial interval.
	We provide conditions on the regularity of both the data and weight for the problem to admit a unique solution, and also provide a solution representation in terms of contour integrals.
	The solution and well-posedness results rely upon an extension of the Fokas (or unified) transform method to initial-nonlocal value problems for linear equations; the necessary extensions are described in detail.
	Despite arising naturally from the Fokas transform method, the uniqueness argument appears to be novel even for initial-boundary value problems.
}

\section{Introduction}

Consider the apparatus arranged as in figure~\ref{fig:Apparatus}.
\begin{figure}
	\begin{center}
		\includegraphics{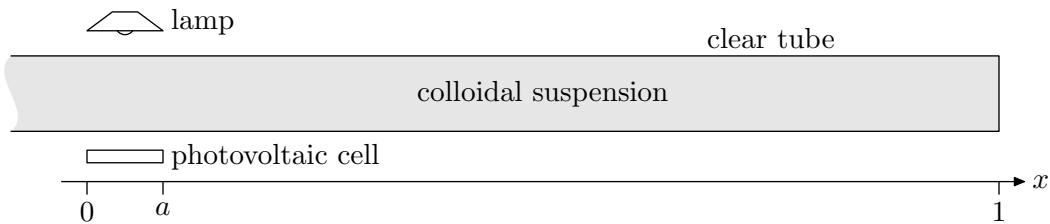}
		\caption{Measurement of concentration of dispersed substance in colloidal suspension}
		\label{fig:Apparatus}
	\end{center}
\end{figure}
A clear tube contains a colloidal suspension whose opacity is a known monotonic function of the concentration of the dispersed substance. At $x=1$, the tube is terminated so that the flux of dispersed substance across the boundary is zero. Assuming no net flow of the liquid phase, no variation in viscosity or temperature, and no external agitation, the dispersed substance diffuses according to the $1$-dimensional heat equation. Therefore, assuming the initial concentration profile, $q_0$, is known, a measurement, $\gamma$, of the concentration of dispersed substance at position $x=0$ for all time $t>0$ specifies the well-posed initial-boundary value problem
\begin{align*}
	[\partial_t-\partial_x^2]q(x,t) &= 0 & (x,t) &\in (0,1)\times(0,T), \\
	q(x,0) &= q_0(x) & x &\in[0,1], \\
	q_x(1,t) &= 0 & t &\in [0,T], \\
	q(0,t) &= \gamma(t) & t &\in [0,T],
\end{align*}
which may be solved, via a classical Fourier series or Green's function approach, for the concentration $q(x,t)$ at any interior point.

One must consider how a measurement of $\gamma$ could practically be made. One approach is to use a lamp (or laser) and photovoltaic cell to measure the opacity of the colloidal suspension, thereby to deduce the concentration of the dispersed substance. However, in any such apparatus, the photovoltaic cell must have some finite width $a\in(0,1)$, therefore the measurement will not be of $q(0,t)=\gamma(t)$, but rather of the average concentration
$$
	\frac{1}{a}\int_0^aq(x,t)\D x = \hat{\gamma}(t).
$$
Even a measurement of $\hat{\gamma}$ represents an idealized situation, in which the sensitivity of the photovoltaic cell is constant over its entire width $[0,a]$. More realistically, one may practically measure
$$
	\int_0^a K(x) q(x,t) \D x = g_0(t),
$$
for $K(x)$ a known nonnegative measure of the sensitivity of the photovoltaic cell, at position $x$.

Given a measurement of $g_0$, it is reasonable to ask whether one may deduce the concentration of the dispersed substance at any position within the colloidal suspension, for any positive time.
Specifically, one may ask whether the following initial-nonlocal value problem is well-posed, and how its solution may be determined:
\begin{subequations} \label{eqn:INVP}
\begin{align} \label{eqn:INVP.PDE}
	[\partial_t-\partial_x^2]q(x,t) &= 0 & (x,t) &\in (0,1)\times(0,T), \\ \label{eqn:INVP.IC}
	q(x,0) &= q_0(x) & x &\in[0,1], \\ \label{eqn:INVP.BC}
	q_x(1,t) &= g_1(t) & t &\in [0,T], \\ \label{eqn:INVP.NC}
	\int_0^1 K(x)q(x,t) \D x &= g_0(t) & t &\in [0,T];
\end{align}
we seek $q:[0,1]\times[0,T]$ for which
\begin{align}
	&x\mapsto q(x,\cdot)\mbox{ is a continuous map }(0,1)\to C^1[0,T], \\
	&t\mapsto q(\cdot,t)\mbox{ is a continuous map }(0,T)\to C^2[0,1].
\end{align}
\end{subequations}
In the above description it was assumed that $g_1=0$ and $K$ is supported on some small interval $[0,a]$, with $a\ll1$, but we find that the more general case introduces no mathematical complications.

Related problems have been studied, particularly by Cannon, since the 60's.
Deckert and Maple~\cite{DM1963a} establish existence and uniqueness for the simplification of problem~\eqref{eqn:INVP} with $K=1$. Cannon~\cite{Can1963a} improved this to allow $K$ constant on its support, but with support varying in time.
Numerical work followed~\cite{CEH1987a} (see references therein for similar problems).
In~\cite{CLM1993a}, existence, uniqueness, and numerical solution are studied for a 2-dimensional analogue.
However all of the above works assume that $K$ is constant on an interval $[0,a]$, and $0$ elsewhere. In~\cite{PS2015b}, this problem is solved for $K$ piecewise linear, by one of the present authors.

The present work has three purposes.
Firstly, we solve problem~\eqref{eqn:INVP} by proving Theorem~\ref{thm:Solution}.
Secondly, we describe the general extension of the Fokas transform method from problems with boundary conditions to problems with nonlocal conditions.
Thirdly, we give a uniqueness argument applicable to both initial-boundary value problems and initial-nonlocal value problems.

\begin{thm} \label{thm:Solution}
	Suppose
	\begin{enumerate}
		\item[(i)]{$q_0$, $g_0$, $g_1$ are differentiable functions, with bounded derivative.}
		\item[(ii)]{$K$ is a function of bounded variation, continuous at $0$, with $K(0)\neq0$.}
	\end{enumerate}
	Then
	problem~\eqref{eqn:INVP} has a unique solution.

	Moreover, there exists $R>0$ sufficiently large that, for all $\tau\in[t,T]$,
	\begin{subequations} \label{eqn:Solution}
	\begin{multline} \label{eqn:Solution.q}
		q(x,t) = \frac{1}{2\pi}\int_{-\infty}^{\infty}\re^{i\lambda x-\lambda^2t}\hat{q}_0(\lambda)\D\lambda - \frac{1}{2\pi}\int_{\partial D_R^+}\re^{i\lambda x-\lambda^2t}\frac{\zeta^+(\lambda;q_0) + H(\lambda;g_0,g_1,\tau)}{\Delta(\lambda)}\D\lambda \\
		- \frac{1}{2\pi}\int_{\partial D_R^-}\re^{i\lambda x-\lambda^2t}\frac{\re^{-i\lambda}\zeta^-(\lambda;q_0) + H(\lambda;g_0,g_1,\tau)}{\Delta(\lambda)}\D\lambda
	\end{multline}
	satisfies the initial-nonlocal value problem~\eqref{eqn:INVP}, where $\hat{q}_0$ is the Fourier transform of (the zero extension of) the initial datum,
	\begin{align} \label{eqn:Solution.Delta}
		\Delta(\lambda)  &= \int_0^1 K(y)\cos([1-y]\lambda)\D y, \\ \label{eqn:Solution.zeta+}
		\zeta^+(\lambda;q_0) &= \int_0^1K(y)\cos([1-y]\lambda)\int_0^y\re^{-i\lambda z}q_0(z)\D z\D y + \int_0^1 K(y) \re^{-i\lambda y} \int_y^1\cos([1-z]\lambda)q_0(z)\D z\D y, \\ \label{eqn:Solution.zeta-}
		\zeta^-(\lambda;q_0) &= i \int_0^1 K(y) \int_y^1\sin([z-y]\lambda)q_0(z)\D z\D y, \\ \label{eqn:Solution.H}
		H(\lambda;g_0,g_1,\tau) &= i\lambda\re^{-i\lambda} \int_0^\tau \re^{\lambda^2s}g_0(s)\D s + \int_0^1 K(y)\re^{-i\lambda y}\D y \int_0^\tau \re^{\lambda^2s}g_1(s)\D s, \\
		\intertext{and where the domains} \label{eqn:Solution.DRpm}
		D_R^\pm &= \{\lambda\in\C^\pm: \Re(\lambda^2)<0 \mbox{ and } |\lambda|>R\}
	\end{align}
	\end{subequations}
	have positively-oriented boundary.
	A sufficiently large $R$ is given in lemma~\ref{lem:Zeros}.
\end{thm}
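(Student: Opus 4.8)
\emph{Strategy.} I would run the Fokas (unified) transform method for \eqref{eqn:INVP}, using the nonlocal condition \eqref{eqn:INVP.NC} in the role usually played by a second boundary condition, then verify the resulting formula directly; uniqueness I expect to be the genuinely new ingredient. For each $\lambda\in\C$ and any solution of \eqref{eqn:INVP.PDE} there is the divergence identity $\partial_t\big(\re^{-i\lambda x+\lambda^2 t}q\big)=\partial_x\big(\re^{-i\lambda x+\lambda^2 t}(q_x+i\lambda q)\big)$. Integrating it over $(0,1)\times(0,t)$ --- and, for later use, over $(0,y)\times(0,t)$ and $(y,1)\times(0,t)$ --- gives the global relation and its localized variants, relating $\int_0^1\re^{-i\lambda x+\lambda^2 t}q(x,t)\D x$, $\hat q_0(\lambda)$, the known transform $\int_0^t\re^{\lambda^2 s}g_1(s)\D s$, and the three unknown boundary spectral functions $\int_0^t\re^{\lambda^2 s}q(0,s)\D s$, $\int_0^t\re^{\lambda^2 s}q_x(0,s)\D s$, $\int_0^t\re^{\lambda^2 s}q(1,s)\D s$. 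The global relation together with its image under $\lambda\mapsto-\lambda$ (legitimate since $\lambda^2$ is even) furnish two of the three equations needed to determine these unknowns.

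\emph{The nonlocal relation and $\Delta,\zeta^\pm,H$.} The third equation comes from \eqref{eqn:INVP.NC}. The time transform $Q(y;\lambda,t)=\int_0^t\re^{\lambda^2 s}q(y,s)\D s$ satisfies, as a function of $y$, the inhomogeneous Helmholtz equation $Q''+\lambda^2 Q=\re^{\lambda^2 t}q(y,t)-q_0(y)$ with Neumann datum $Q_y(1)=\int_0^t\re^{\lambda^2 s}g_1(s)\D s$; solving this in the basis $\{\cos([1-y]\lambda),\sin([1-y]\lambda)\}$ adapted to $x=1$ and pairing against $K$ turns $\int_0^1K(y)Q(y)\D y=\int_0^t\re^{\lambda^2 s}g_0(s)\D s$ into an identity whose coefficient of the unknown $\int_0^t\re^{\lambda^2 s}q(1,s)\D s$ is exactly $\Delta(\lambda)=\int_0^1 K(y)\cos([1-y]\lambda)\D y$, whose $q_0$-dependent inhomogeneity yields $\zeta^\pm(\lambda;q_0)$ (the $\sin([z-y]\lambda)$ part being $\zeta^-$, the $\re^{-i\lambda z}$ and $\cos([1-z]\lambda)$ parts appearing once the global relations are used to eliminate the remaining unknowns), and whose $g_0,g_1$-dependent part is $H(\lambda;g_0,g_1,t)$. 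Solving the resulting $3\times3$ system (determinant a nonzero multiple of $\Delta(\lambda)$) and substituting into the Fokas inversion formula $q=\frac1{2\pi}\int_{\R}\re^{i\lambda x-\lambda^2 t}\hat q_0\D\lambda-\frac1{2\pi}\int_{\partial D^+}-\frac1{2\pi}\int_{\partial D^-}$ (the untruncated contours $\partial D^\pm$ being the boundaries of $\{\lambda\in\C^\pm:\Re(\lambda^2)<0\}$), then replacing $\partial D^\pm$ by $\partial D_R^\pm$, produces \eqref{eqn:Solution.q}. This replacement is legitimate because by lemma~\ref{lem:Zeros} $\Delta$ has no zeros on $\overline{D_R^+\cup D_R^-}$; showing that the finitely many nonreal zeros of $\Delta$ with $|\lambda|<R$ contribute nothing (their residues cancelling between the $+$ and $-$ integrals under $\lambda\mapsto-\lambda$) is one delicate point, and the analyticity and decay of $\zeta^\pm/\Delta$ and $H/\Delta$ on $\overline{D_R^\pm}$ --- where hypotheses (i) (one $t$-integration by parts, giving $O(|\lambda|^{-1})$ decay) and (ii) ($K$ of bounded variation, continuous and nonzero at $0$, so that the $K(0)\sin\lambda/\lambda$ term of $\Delta$ dominates its remainder) are used --- is the other.

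\emph{Verification.} Given \eqref{eqn:Solution.q} I check \eqref{eqn:INVP.PDE}--\eqref{eqn:INVP.NC} in turn. The equation holds by differentiation under the integral (justified by the above estimates, each $\re^{i\lambda x-\lambda^2 t}$ solving it); the initial condition follows on letting $t\downarrow0$, the $\R$-integral tending to $q_0(x)$ by Fourier inversion (pointwise on $(0,1)$ under (i)) and the $\partial D_R^\pm$-integrals vanishing by closing into $D_R^\pm$ and Jordan's lemma. For the last two conditions I would first show that the right-hand side of \eqref{eqn:Solution.q} is independent of $\tau\in[t,T]$ --- the $\tau$-dependent part of $H$ contributes a function of $\lambda$ that is analytic and, since $\re^{\lambda^2(s-t)}$ decays in $D_R^\pm$ for $s\geq t$, superexponentially small there, hence integrates to $0$ --- and then take $\tau=t$. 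Differentiating \eqref{eqn:Solution.q} at $x=1$ and collapsing the contour integrals via the global relation gives $q_x(1,t)=g_1(t)$; applying $\int_0^1K(y)(\cdot)\D y$, interchanging the orders of integration, reconstituting $\Delta$ and $\zeta^\pm$ from the $y$-integrals, and again using the global relation gives $\int_0^1 K(x)q(x,t)\D x=g_0(t)$. I expect this last computation, with its triple integral and the need to recover $\Delta$ and $\zeta^\pm$ exactly, to be the most calculation-intensive step.

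\emph{Uniqueness.} Suppose $q$ solves \eqref{eqn:INVP} with $q_0=g_0=g_1=0$. The global relations and the nonlocal relation become homogeneous identities valid for all $\lambda\in\C$; rather than an energy argument I would work directly in the transform framework, using $\Delta\not\equiv0$ (guaranteed by $K(0)\neq0$ through the large-$\lambda$ behaviour of $\Delta$) to solve for the boundary spectral functions and the localized global relations to bound their growth, so that the entire function assembled from them is bounded and tends to $0$ at infinity in every direction and hence vanishes by Liouville; substituting back through the inversion formula then forces $q\equiv0$. The obstacle --- and the place where the nonlocality genuinely departs from the standard initial-boundary value situation --- is establishing this decay uniformly up to and across the rays $\arg\lambda=\pi/4,3\pi/4$ despite the factor $1/\Delta(\lambda)$, which is precisely what lemma~\ref{lem:Zeros} and the asymptotics of $\Delta$ are there to provide. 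I therefore anticipate the uniqueness argument to be the principal difficulty.
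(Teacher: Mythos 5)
The central gap is your treatment of decay in $\overline{D_R^+}$. You assert that the relevant ratios admit $\mathcal{O}(|\lambda|^{-1})$ decay on $\overline{D_R^\pm}$ after one integration by parts, but this is false in the upper sector: lemma~\ref{lem:Decay} gives only $\zeta^+(\lambda;\phi)/\Delta(\lambda)=\mathcal{O}(1)$ there, and remark~\ref{rmk:K0at0} shows the limit is generically the nonzero constant $-\frac{1}{K(0)}\int_0^1K(y)\phi(y)\D y$, so no integration by parts will produce decay; the same is true of $H/\Delta$ because of the factor $\re^{-i\lambda}$. Consequently every Jordan's-lemma step you invoke in $D_R^+$ --- discarding the terms containing $\hat q(\lambda,t)$ from the Dirichlet-to-Neumann map, proving independence of $\tau$, and recovering the initial condition --- fails as you describe it. The paper's device is to split $\re^{i\lambda x}=\re^{i\lambda x/2}\re^{i\lambda x/2}$ and use that $\Im(\lambda)\to+\infty$ as $\lambda\to\infty$ in $\overline{D_R^+}$, so that one factor supplies exponential decay $\mathcal{O}(\re^{-x|\lambda|/2\sqrt2})$ while the other serves as the Jordan kernel (equation~\eqref{eqn:Awkward.Decay}); this works only for $x>0$, and the hypothesis $K(0)\neq0$ (that is, $b=1$ in lemma~\ref{lem:Decay}) is exactly what is needed for even the $\mathcal{O}(1)$ bound. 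This is the principal new analytic feature of the nonlocal problem, and your proposal does not engage with it.

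Two further points. First, your deformation of $\partial D^\pm$ to $\partial D_R^\pm$ \emph{after} substituting the Dirichlet-to-Neumann map does cross zeros of $\Delta$, but the residues of the data part do not cancel between the $+$ and $-$ integrals under $\lambda\mapsto-\lambda$; at each zero of $\Delta$ the pole of the data part cancels against the pole of the discarded $\hat q(\cdot,t)$ part (their sum, being a spectral function, is entire). The paper avoids the issue entirely by deforming at the level of the Ehrenpreis form~\eqref{eqn:Ehrenpreis}, where the integrands are entire, and only then substituting, so that lemma~\ref{lem:Zeros} guarantees analyticity on all of $\overline{D_R^\pm}$. Second, your Liouville-based uniqueness argument is a genuinely different and unsubstantiated route: in the paper uniqueness costs nothing extra, since stages 1--2 show that any sufficiently regular solution is \emph{necessarily} equal to the right-hand side of~\eqref{eqn:Solution.q}, which is determined by the data alone; there is no separate entire-function argument to make. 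On the positive side, your derivation of the nonlocal global relation by solving the Helmholtz equation for $\int_0^t\re^{\lambda^2s}q(y,s)\D s$ and pairing against $K$ is a legitimate alternative to the paper's device of integrating the global relation against $\re^{i\lambda y}K(y)$, and leads to an equivalent linear system with determinant proportional to $\Delta$.
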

The solution formula can be used to establish the following corollary, which gives much stronger regularity than problem~\eqref{eqn:INVP} requires.
\begin{cor} \label{cor:RegularityOfSolution}
	Suppose that $q(x,t)$ is given by equation~\eqref{eqn:Solution.q}.
	Then
	\begin{align}
		&x\mapsto q(x,\cdot)\mbox{ is a continuous map }(0,1)\to C^\infty(0,T), \\
		&t\mapsto q(\cdot,t)\mbox{ is a continuous map }(0,T)\to C^\infty[0,1].
	\end{align}
\end{cor}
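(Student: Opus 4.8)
The strategy is to differentiate the contour-integral representation~\eqref{eqn:Solution.q} under the integral sign, as many times as we like in either $x$ or $t$, and to justify this by showing that the resulting integrands decay rapidly enough along the three contours to permit differentiation and to guarantee continuity of the result. The key structural fact we exploit is that, by construction, $q$ does not depend on the choice of $\tau\in[t,T]$; we are free to take $\tau=T$ once and for all, so that the contour integrals over $\partial D_R^\pm$ involve only the fixed exponential $\re^{i\lambda x-\lambda^2 t}$ with $t$ ranging in a compact subinterval of $(0,T)$, and the data-dependent pieces $\zeta^\pm$, $H$ are $\lambda$-dependent but $t$-independent. Differentiating $k$ times in $x$ brings down a factor $(i\lambda)^k$; differentiating $j$ times in $t$ brings down $(-\lambda^2)^j$. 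So the question reduces to: do the integrands, multiplied by an arbitrary polynomial in $\lambda$, remain absolutely integrable along each contour, locally uniformly in $(x,t)$?

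First I would treat the Fourier-transform term $\frac{1}{2\pi}\int_{\R}\re^{i\lambda x-\lambda^2 t}\hat q_0(\lambda)\,\D\lambda$. Here $\lambda$ is real, so $|\re^{i\lambda x-\lambda^2 t}|=\re^{-\lambda^2 t}$, which for $t$ in a compact subset of $(0,T)$ decays faster than any polynomial; since $\hat q_0$ is bounded (indeed $q_0\in L^1$ under hypothesis~(i), so $\hat q_0\in L^\infty$), the product $\lambda^m\re^{-\lambda^2 t}\hat q_0(\lambda)$ is integrable with a bound uniform on compact $t$-intervals, and dominated convergence gives both the interchange of derivative and integral and the continuity of each derivative in $(x,t)$. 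For the two contour integrals over $\partial D_R^\pm$, the curves lie in the regions $\{\Re(\lambda^2)<0\}$, so on $\partial D_R^+$ we have $\Re(\lambda^2)\le 0$ and in fact $\re^{-\lambda^2 t}$ is bounded (for $t\ge 0$) along the contour, with exponential decay as $|\lambda|\to\infty$ along the oblique rays where $\Re(\lambda^2)<0$ strictly; the portion of $\partial D_R^\pm$ lying on the critical lines $\Re(\lambda^2)=0$ is compact, so contributes no growth. The factor $\re^{i\lambda x}$ is bounded by $\re^{|\Im\lambda|\,|x|}\le \re^{|\Im\lambda|}$ for $x\in[0,1]$, and on $\partial D_R^+$ one has $\Im\lambda\ge 0$ growing linearly, but this is dominated by the exponential decay coming from $\re^{-\lambda^2 t}$ once we are off the critical lines; for $t$ in a compact subinterval of $(0,T)$ this domination is uniform. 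It then remains to control the algebraic factor $\frac{\zeta^+(\lambda;q_0)+H(\lambda;g_0,g_1,T)}{\Delta(\lambda)}$: the numerator grows at most polynomially (the $\int_0^T\re^{\lambda^2 s}g_i(s)\,\D s$ terms in $H$ are $O(1/\lambda^2)$ after integration by parts, using that $g_i$ has bounded derivative, and the $\zeta^\pm$, being iterated integrals against $q_0\in L^1$, are bounded times a polynomial in $\lambda$), while by Lemma~\ref{lem:Zeros} the denominator $\Delta(\lambda)$ is bounded below on $\partial D_R^\pm$ — this is precisely why $R$ is chosen as in that lemma, to stay away from the zeros of $\Delta$ — so the quotient is at most polynomial in $\lambda$ along the contours. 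Multiplying by any fixed power $\lambda^m$ still leaves a function that is integrable against the super-exponentially small factor $\re^{-\lambda^2 t}$, locally uniformly in $(x,t)$.

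The main obstacle, and the one deserving genuine care, is the uniformity of these estimates as $t\downarrow 0$ and as $t\uparrow T$. Near $t=0$ the factor $\re^{-\lambda^2 t}$ loses its decay on the real line, so the real-line integral must genuinely be restricted to $t$ in a compact subinterval $[t_0,T-t_0]$ — but this is exactly what ``continuous map $(0,T)\to C^\infty[0,1]$'' allows, since continuity and smoothness are local in $t$ and we never need the limit $t\to 0$. On the contour integrals the parameter $\tau$ was a red herring once we fix $\tau=T$, but one should double check that nothing in the derivation of~\eqref{eqn:Solution.q} forced $\tau\ge t$ in a way that obstructs taking $\tau=T$ for all $t<T$ simultaneously — the statement of Theorem~\ref{thm:Solution} explicitly permits any $\tau\in[t,T]$, so $\tau=T$ is always admissible. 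With $\tau$ fixed, the only place $t$ enters the contour integrands is through $\re^{-\lambda^2 t}$, and the only place $x$ enters is through $\re^{i\lambda x}$; both are entire in their respective variables with derivatives that are again bounded by (polynomial)$\times$(the same exponential), so an induction on the order of differentiation, with dominated convergence at each step, yields smoothness in each variable separately and joint continuity of every mixed partial. I would organize the write-up as: (1) reduce to $\tau=T$; (2) state the contour decay lemma ($|\re^{-\lambda^2 t}|\le C_{[t_0,T-t_0]}\re^{-c|\lambda|^2}$ on $\partial D_R^\pm$, uniformly for $t\in[t_0,T-t_0]$, using $\Re(\lambda^2)<0$ off a compact set) and the polynomial-growth lemma for the algebraic factor (citing Lemma~\ref{lem:Zeros} for the lower bound on $\Delta$); (3) apply differentiation under the integral sign and dominated convergence, inductively, to conclude.
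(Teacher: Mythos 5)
Your overall strategy (fix $\tau=T$, differentiate under the integral sign, justify by decay of the integrands) is the right shape, and your treatment of the real-line term is fine, but the decay mechanism you assign to the two contour integrals is wrong, and the error is fatal as written. The unbounded portion of $\partial D_R^\pm$ consists precisely of the rays $\arg\lambda\in\{\pi/4,3\pi/4\}$ (resp.\ their reflections), i.e.\ it lies \emph{on} the critical lines $\Re(\lambda^2)=0$; the only compact piece is the arc $|\lambda|=R$. So $|\re^{-\lambda^2 t}|\equiv 1$ along the entire unbounded part of the contour and supplies no decay whatsoever --- you have the geometry exactly backwards. The decay that actually saves these integrals (and that the paper uses) is $|\re^{i\lambda x}|=\re^{-x\Im\lambda}\leq\re^{-x|\lambda|/\sqrt2}$ on $\partial D_R^+$ and $|\re^{i\lambda(x-1)}|\leq\re^{-(1-x)|\lambda|/\sqrt2}$ on $\partial D_R^-$, which dominate any monomial in $\lambda$ --- but only for $x$ in the \emph{open} interval. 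Your supporting estimates are also incorrect: $\zeta^\pm(\lambda;q_0)$ are not polynomially bounded (they contain $\cos([1-y]\lambda)$ and $\re^{-i\lambda z}$ and grow exponentially in $|\Im\lambda|$), and ``$\Delta$ bounded below'' is not the relevant fact; the whole point of lemma~\ref{lem:Decay} is that the exponential growth of numerator and denominator cancel, and even then $\zeta^+/\Delta=\mathcal{O}(1)$ only, so at $x=0$ the $\partial D_R^+$ integrand has no decay at all and dominated convergence is unavailable.

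This matters because the corollary asserts $t\mapsto q(\cdot,t)$ is continuous into $C^\infty[0,1]$, i.e.\ every $x$-derivative extends continuously to the closed interval. Even after repairing the decay argument as above, you obtain only interior-in-$x$ regularity. The paper handles the endpoints $x=0,1$ by the separate argument of equations~\eqref{eqn:Existence.Proof:qx1}--\eqref{eqn:Existence.Proof:qx2}: deform $\partial D_R^\pm$ to the horizontal lines $\gamma_R^\pm$ (where $\re^{-\lambda^2t}$ genuinely decays for $t>0$), and use the identity~\eqref{eqn:zeta.cancel} to cancel the non-decaying parts; your plan omits this entirely. A salvageable variant of your idea would be to first deform $\partial D_R^\pm$ into the open sectors where $\Re(\lambda^2)<0$ strictly, after which $\re^{-\lambda^2t}$ does decay Gaussian-fast for $t$ in compact subsets of $(0,T)$ and your dominated-convergence scheme would go through uniformly up to $x=0,1$ --- but that deformation requires its own Jordan-type justification and is not what you wrote.
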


\subsubsection*{Layout of paper}

In section~\ref{sec:FTM.Overview} we give an overview of the three stages of the Fokas transform method, as it has been applied to related problems. We also state the two principal lemmata upon which the Fokas transform method for initial-nonlocal value problem~\eqref{eqn:INVP} relies. The proofs of these lemmata are given in appendix~\ref{sec:AppProofs}.

In sections~\ref{sec:Unique} and~\ref{sec:Exist}, we implement stages~1--2, and~3 of the Fokas transform method, respectively.
Specifically, under the assumption of existence of a sufficiently smooth solution, the arguments of section~\ref{sec:Unique} establish the uniqueness and solution representation results of theorem~\ref{thm:Solution}; in section~\ref{sec:Exist} we show explicitly that the solution previously obtained satisfies initial-nonlocal value problem~\eqref{eqn:INVP}, thereby establishing the existence result of theorem~\ref{thm:Solution}.
The proof of corollary~\ref{cor:RegularityOfSolution} concludes section~\ref{sec:Exist}.

In section~\ref{sec:Multipoint}, we investigate relationships between initial-nonlocal value problem~\eqref{eqn:INVP} and certain initial-multipoint value problems of the form studied in~\cite{PS2015b}.
In section~\ref{sec:General}, we give the necessary extensions to the approach of the earlier sections required to study general initial-nonlocal value problems for the heat equation.

\section{Overview of the Fokas transform method} \label{sec:FTM.Overview}

The Fokas transform method~\cite{Fok2008a} (or unified transform method) has been used to solve a variety of initial-boundary value problems for linear and nonlinear evolution partial differential equations; see below for a brief overview of the method, and~\cite{DTV2014a} for an extended pedagogical introduction.
Finite interval problems for simple boundary conditions were solved by Fokas and Pelloni~\cite{FP2001a,Pel2004a} and later generalized to more complicated boundary conditions~\cite{Smi2012a,Smi2013a}.
The method was extended, largely by Sheils, to interface problems for second order equations~\cite{APSS2015a,DPS2014a,DS2014a,DS2015a,SS2015a} and higher order equations~\cite{DSS2015a}.
The Sheils formulation of the Dirichlet-to-Neumann map for interface problems was adapted to study multipoint problems~\cite{PS2015b}.
Via reduction to multipoint problems, the latter paper also enables study of second order nonlocal problems for continuous piecewise linear weights $K$.

The Fokas transform method, as applied to two-point initial-boundary value problems for linear evolution equations of the form
\BE \label{eqn:GeneralPDE}
	\left[\partial_t+\omega(-i\partial_x)\right]q(x,t)=0,
\EE
for degree $n$ polynomial $\omega$,
may be understood as a 3-stage method in the following way.
\begin{description}
	\item[Stage 1]{
		Assuming existence of a sufficiently smooth function $q$ satisfying both the $n$\Th order partial differential equation and the initial condition, obtain a pair of equations that $q$ must satisfy. Specifically, one derives the \emph{global relation}, a linear equation which relates $2n+2$ quantities: the Fourier transform of $q(\cdot,t)$, the Fourier transform of the initial datum, and certain time-transforms of the boundary values of $q$ and its first $n-1$ spatial derivatives. These time-transformed boundary values are known as \emph{spectral functions}.
		The second important equality, often known as the \emph{Ehrenpreis form}, provides a representation of $q$ in terms of certain contour integrals of the Fourier transform of the initial datum and the same $2n$ spectral functions that appear in the global relation.
	}
	\item[Stage 2]{
		For a well-posed initial-boundary value problem, $n$ of the boundary values (for example, with $n=2$, the Dirichlet data at $x=0$ and $x=1$) may be specified by the boundary conditions, but the other $n$ (continuing the example, Neumann) boundary values are not specified in the problem. Therefore, in order to use the Ehrenpreis form as an effective integral representation, it is necessary to obtain formulae for each of the unknown spectral functions in terms of the data of the problem. This process of constructing spectral functions from data is known as the \emph{Dirichlet-to-Neumann map}, even if the unknown boundary values are not actually $q_x(0,t)$ and $q_x(1,t)$.

		In general, the Dirichlet-to-Neumann map may be expressed as a linear system built from the boundary conditions and the global relation.
		The ratios $\zeta^\pm/\Delta$ are the ratios of determinants that appear when Cramer's rule is used to solve the linear system.
		It is established, via a contour deformation argument, that resulting terms involving the Fourier transform of $q(\cdot,t)$ do not contribute to the solution representation.

		At the end of stage 2, one has shown that any solution of the problem is necessarily given in terms of the data by a specific contour integral representation.
		It therefore only remains to show that this integral representation actually solves the problem, i.e.\ to establish existence of a solution.
		Uniqueness of the solution is therefore a direct consequence of the method of construction of the integral representation.
		It appears that this uniqueness argument via the Fokas transform method has not previously been explicitly described, even for initial-boundary value problems.
	}
	\item[Stage 3]{
		Defining $q$ by the formula obtained in stage 2, we show directly that $q$ satisfies the initial-boundary value problem, thereby establishing existence of a solution.
		It turns out that the contour integrals used to define $q$ converge uniformly in $(x,t)$ up to the boundaries, which simplifies the process of evaluating $q$ on the boundaries.
		The space and time dependence of $q$ is very simple, so it is typically easy to see that $q$ satisfies the PDE.
	}
\end{description}

Generalising the Fokas method from initial-boundary value problems to initial-nonlocal value problems requires new implementations of stages~2 and~3.
The implementation of stages~2 and~3 requires repeated use of two technical lemmata, one concerning the locus of zeros of the determinant $\Delta$, and the other concerning the boundedness and decay of determinant ratios $\zeta^\pm/\Delta$ in $\overline{D_R^\pm}$.
For the initial-nonlocal problem~\eqref{eqn:INVP}, these lemmata take the following forms.

\begin{lem} \label{lem:Zeros}
	Suppose $k(y)=K(1-y)$ has support $[a,b]\subseteq[0,1]$, has bounded total variation $V_{0}^{1}(k)<\infty$, is left-continuous at the endpoint $b$, and $k(b)\neq0$.
	Choose $\delta_0\in(a,b)$ such that $V_{b-\delta_0}^b(k)<|k(b)|/8$.
	Define $\Delta$ as in theorem~\ref{thm:Solution}.
	Then the zeros of $\Delta(\lambda)$ all have imaginary part less than
	\BES
		M=\max\left\{ \frac{\log 2}{b} , \frac{1}{\delta_0}\log\left( \frac{4V_{0}^{1}(k)}{|k(b)|} \right) \right\}.
	\EES
	Moreover, choosing $R=\sqrt{2}M$, all zeros of $\Delta$ lie exterior to $D_R^\pm$.
\end{lem}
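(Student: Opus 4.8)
The plan is to establish the estimate $\Delta(\lambda)\neq0$ for every $\lambda$ with $\Im\lambda\geq M$; this is precisely the statement that all zeros have imaginary part less than $M$. The ``moreover'' clause then follows quickly: since $\cos$ is even, $\Delta(-\lambda)=\Delta(\lambda)$, so the zero set is symmetric under $\lambda\mapsto-\lambda$ and every zero in fact satisfies $-M<\Im\lambda<M$; and if a zero $\lambda$ lay in $D_R^+\cup D_R^-$ with $R=\sqrt2 M$, then $\Re(\lambda^2)<0$ would give $|\Re\lambda|<|\Im\lambda|$, hence $R^2<|\lambda|^2<2(\Im\lambda)^2$ and $|\Im\lambda|>M$, a contradiction.

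For the estimate I would first undo the definition of $\Delta$ by the substitution $u=1-y$, giving $\Delta(\lambda)=\int_0^1 k(u)\cos(u\lambda)\D u$, with $k$ of bounded variation and supported in $[a,b]$. Extending $k$ by zero and integrating by parts in the Riemann--Stieltjes sense (boundary terms vanish since $k$ has compact support and $u\mapsto\sin(u\lambda)/\lambda$ is continuous) yields
\[
	\lambda\Delta(\lambda)=-\int_{\R}\sin(u\lambda)\D k(u).
\]
The point of this step is that the right-hand side carries no factor $1/\lambda$, which is what makes the ensuing estimate uniform in $\Re\lambda$. Because $k$ is left-continuous at $b$ with $k(b)\neq0$ and vanishes beyond $b$, the measure $\D k$ has an atom of mass $-k(b)$ at $u=b$; peeling it off gives $\lambda\Delta(\lambda)=k(b)\sin(b\lambda)-\int_{\R}\sin(u\lambda)\D\nu(u)$, where $\nu$ carries the remaining variation, with $|\nu|(\R)\leq V_0^1(k)$ and, crucially, $|\nu|\big((b-\delta_0,b]\big)\leq V_{b-\delta_0}^b(k)<|k(b)|/8$.

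Now fix $\lambda=\xi+i\eta$ with $\eta\geq M$. From $|\sin(u\lambda)|^2=\sin^2(u\xi)+\sinh^2(u\eta)$ one gets $|\sin(b\lambda)|\geq\sinh(b\eta)$ and $|\sin(u\lambda)|\leq\cosh(u\eta)$ for $u\in[0,1]$. Splitting the $\nu$-integral at $b-\delta_0$, the part over $(b-\delta_0,b]$ is at most $\frac18|k(b)|\cosh(b\eta)$ and the part over $[0,b-\delta_0]$ is at most $\cosh((b-\delta_0)\eta)V_0^1(k)\leq e^{(b-\delta_0)\eta}V_0^1(k)$, so
\[
	|\lambda\Delta(\lambda)|>|k(b)|\sinh(b\eta)-\tfrac18|k(b)|\cosh(b\eta)-e^{(b-\delta_0)\eta}V_0^1(k)\geq e^{b\eta}\Big(\tfrac14|k(b)|-e^{-\delta_0\eta}V_0^1(k)\Big),
\]
where the last inequality uses $\eta\geq(\log2)/b$ (so that $e^{b\eta}\geq2$ and $\sinh(b\eta)-\frac18\cosh(b\eta)\geq\frac14 e^{b\eta}$). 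Since also $\eta\geq\delta_0^{-1}\log\!\big(4V_0^1(k)/|k(b)|\big)$, the bracket is nonnegative, whence $|\lambda\Delta(\lambda)|>0$ and $\Delta(\lambda)\neq0$.

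The main obstacle, and the reason both for the integration by parts and for the split at $b-\delta_0$, is obtaining a bound uniform in $\Re\lambda$: estimating $\Delta$ directly would leave the prefactor $1/\lambda$, which only controls $\Delta$ on bounded $\lambda$-sets and so cannot locate zeros of large real part. Integration by parts removes that prefactor; the split then uses precisely the two hypotheses on $k$ near $b$ — left-continuity and small local variation — to ensure that the leading term $k(b)\sin(b\lambda)\sim\frac12 k(b)e^{b\eta}$ genuinely dominates. What remains is routine bookkeeping: identifying the atom of $\D k$ at $b$ (this is where left-continuity at $b$ enters) and tracking which portions of the total variation are assigned to the near and far pieces of the split.
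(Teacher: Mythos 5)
Your proof is correct and follows essentially the same route as the paper's: integrate by parts to write $\lambda\Delta(\lambda)$ as a Stieltjes transform of $\D k$, peel off the dominant endpoint contribution $k(b)\sin(b\lambda)$, and split the remaining variation at $b-\delta_0$ so that the two terms in the definition of $M$ control the near and far pieces respectively. The only cosmetic differences are that the paper symmetrizes $k$ to a function $\psi$ on $[-1,1]$ so as to work with the kernel $\re^{i\lambda y}$ and argues in the lower half-plane (declaring the upper half-plane analogous), whereas you keep the cosine form and use the evenness of $\Delta$ to transfer your upper-half-plane estimate to the lower half-plane.
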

\noindent
No attempt has been made to optimize $R$.

\begin{lem} \label{lem:Decay}
	Suppose $k(y)=K(1-y)$ has support $[a,b]\subseteq[0,1]$, has bounded total variation $V_{0}^{1}(k)<\infty$, is left-continuous at the endpoint $b$, and $k(b)\neq0$.
	Suppose also $\|\phi'\|_{\infty}<\infty$ and define $\zeta^\pm$, $\Delta$ as in theorem~\ref{thm:Solution}.
	Then, as $\lambda\to\infty$ from within $\overline{D_R^-}$,
	\BE \label{eqn:DecayLem.-}
		\frac{\zeta^-(\lambda;\phi)}{\Delta(\lambda)} = \mathcal{O}(|\lambda|^{-1}),
	\EE
	uniformly in $\arg(\lambda)$.
	If, in addition, $b=1$ then, as $\lambda\to\infty$ from within $\overline{D_R^+}$,
	\BE \label{eqn:DecayLem.+}
		\frac{\zeta^+(\lambda;\phi)}{\Delta(\lambda)} = \mathcal{O}(1),
	\EE
	uniformly in $\arg(\lambda)$.
\end{lem}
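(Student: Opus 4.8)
The plan is to estimate $\zeta^\pm(\lambda;\phi)$ from above and $\Delta(\lambda)$ from below, both uniformly for $\lambda\in\overline{D_R^\pm}$, and then take the quotient. The lower bound on $|\Delta(\lambda)|$ is exactly the content of the argument behind Lemma~\ref{lem:Zeros}: after substituting $y\mapsto 1-y$ so that the kernel becomes $k$, one writes $\Delta(\lambda)=\int_a^b k(y)\cos(y\lambda)\D y$, and splits the integral at $b-\delta_0$. On the piece $[b-\delta_0,b]$ one integrates by parts against $\D k$ (using that $k$ has bounded variation and is left-continuous at $b$), so that the leading term is $k(b)\sin(b\lambda)/\lambda$-like and is controlled below by $|k(b)|\re^{|\Im\lambda|\,b}/(c|\lambda|)$ once $|\Im\lambda|$ is large, while the remainder is dominated by $V_{b-\delta_0}^b(k)\,\re^{|\Im\lambda|\,b}/|\lambda|<|k(b)|\re^{|\Im\lambda|\,b}/(8|\lambda|)$; the piece $[a,b-\delta_0]$ is $\mathcal{O}\!\big(V_0^1(k)\re^{|\Im\lambda|(b-\delta_0)}/|\lambda|\big)$, hence negligible relative to the main term precisely when $|\Im\lambda|$ exceeds the quantity $M$ in Lemma~\ref{lem:Zeros}. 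The upshot is a bound of the shape $|\Delta(\lambda)|\geq c_0\,\re^{|\Im\lambda|\,b}/|\lambda|$ valid throughout $\overline{D_R^\pm}$, with $R=\sqrt2 M$.

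Next I bound the numerators. For $\zeta^-$, Fubini and the substitution $y\mapsto 1-y$ give an integrand in which the innermost factor $\int_y^1\sin([z-y]\lambda)\phi(z)\D z$ is estimated by integration by parts in $z$: the boundary terms and the $\phi'$ term together yield $\mathcal{O}\!\big(\|\phi'\|_\infty\,\re^{|\Im\lambda|(1-y)}/|\lambda|\big)$ (I use $\|\phi'\|_\infty<\infty$ here, and that $|\sin([z-y]\lambda)|\le\re^{|\Im\lambda|(z-y)}$ for $z\ge y$). Integrating this against $|k|$ over $y\in[a,b]$, the dominant contribution comes from $y$ near $a$, giving $|\zeta^-(\lambda;\phi)|=\mathcal{O}\!\big(\re^{|\Im\lambda|(1-a)}/|\lambda|\big)$. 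However — and this is the key point — the same integration by parts can instead be anchored so that the exponential weight is measured from the \emph{left endpoint $b$ of the support of $k$} rather than from $1$: since $k$ vanishes for $y>b$, only $y\le b$ contributes, and one can rewrite $\sin([z-y]\lambda)$ as a combination of $\re^{\pm i(z-b)\lambda}\re^{\pm i(b-y)\lambda}$, absorbing $\re^{|\Im\lambda|(b-y)}\le\re^{|\Im\lambda| b}$ and leaving the $z$-integral over $[y,1]$ producing at most $\re^{|\Im\lambda|(1-b)}$... so in fact $|\zeta^-(\lambda;\phi)|=\mathcal{O}\!\big(\re^{|\Im\lambda| b}\,\re^{|\Im\lambda|(1-b)}/|\lambda|\big)$, which is worse. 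The correct route, rather, is to keep the weight as $\re^{|\Im\lambda|(1-y)}$ but observe that for $\lambda\in\overline{D_R^-}$ one has $\Im\lambda\le 0$, so $\re^{|\Im\lambda|(1-y)}=\re^{-\Im\lambda(1-y)}$, and after dividing by the lower bound $|\Delta(\lambda)|\ge c_0\re^{-\Im\lambda\, b}/|\lambda|$ the net exponential is $\re^{-\Im\lambda(1-b)}$; since $k$ is supported in $[a,b]\subseteq[0,1]$ we only have $1-b\ge0$, so this does \emph{not} decay. Thus the clean statement requires exploiting the cosine/sine structure more carefully: writing $2\cos([1-y]\lambda)=\re^{i(1-y)\lambda}+\re^{-i(1-y)\lambda}$ in $\Delta$, the dominant half-plane exponential in $\Delta$ is $\re^{-\Im\lambda\,(1-a)}$ on the relevant sector, matching the numerator's $\re^{-\Im\lambda\,(1-a)}$ up to the $1/|\lambda|$ gained from integrating $\phi$ by parts, and the ratio is $\mathcal{O}(|\lambda|^{-1})$ uniformly in $\arg\lambda$. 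For $\zeta^+$ one performs the analogous computation with the two terms of \eqref{eqn:Solution.zeta+}; here the hypothesis $b=1$ forces the support of $k$ to reach the endpoint, so $\cos([1-y]\lambda)$ is not uniformly small where $k$ lives, $\Delta$ picks up the full $\re^{|\Im\lambda|}$, and the numerator is comparably of size $\re^{|\Im\lambda|}$ — but without the extra $1/|\lambda|$, because $\zeta^+$ contains a term $\int_0^1 K(y)\re^{-i\lambda y}\int_y^1\cos([1-z]\lambda)q_0(z)\D z\D y$ whose inner integral is only $\mathcal{O}(1)$ after integration by parts (the boundary term at $z=1$ survives with no decay). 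Hence $\zeta^+/\Delta=\mathcal{O}(1)$.

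The steps, in order, are: (1) reduce to the $k$-variables and record the lower bound $|\Delta(\lambda)|\gtrsim \re^{|\Im\lambda|\,d}/|\lambda|$ on $\overline{D_R^\pm}$ for the appropriate $d\in\{1-a,1\}$, reusing the split-and-integrate-by-parts argument of Lemma~\ref{lem:Zeros}; (2) apply Fubini to $\zeta^\pm$ and integrate the $\phi$- (resp.\ $q_0$-) integral by parts once, using $\|\phi'\|_\infty<\infty$, to obtain $|\zeta^-(\lambda;\phi)|\lesssim \re^{|\Im\lambda|\,d}/|\lambda|$ and, when $b=1$, $|\zeta^+(\lambda;\phi)|\lesssim \re^{|\Im\lambda|\,d}$; (3) divide, noting that all estimates were obtained uniformly in $\arg\lambda$ by bounding every trigonometric factor by its exponential envelope. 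The main obstacle is step (1)–(2) bookkeeping: one must track \emph{which} exponential rate $\re^{|\Im\lambda|\,d}$ appears in numerator and denominator and check they cancel exactly, and in particular verify that the $b<1$ case for $\zeta^-$ genuinely yields decay while the $b=1$ case for $\zeta^+$ genuinely yields only boundedness — i.e.\ that the $1/|\lambda|$ gained from integrating $\phi$ by parts in $\zeta^-$ is truly lost in $\zeta^+$ because of the surviving endpoint term at $z=1$. Uniformity in $\arg\lambda$ is then automatic since no estimate used the phase of $\lambda$, only $|\Im\lambda|$ and $|\lambda|$, both of which are comparable to $|\lambda|$ on $\overline{D_R^\pm}$.
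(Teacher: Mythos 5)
Your skeleton is the paper's: bound $\lvert\Delta\rvert$ from below by recycling the Lemma~\ref{lem:Zeros} estimate, bound $\lvert\zeta^\pm\rvert$ from above after one integration by parts in the inner integral, and divide. But the execution has two genuine problems, the second of which is fatal to the claimed rate in~\eqref{eqn:DecayLem.-}. First, your exponential bookkeeping is garbled: with $k=K(1-\cdot)$ supported on $[a,b]$ we have $\Delta(\lambda)=\int_a^bk(u)\cos(u\lambda)\D u$, so the dominant envelope of $\Delta$ (and likewise of $\zeta^-$) is $\re^{b\lvert\Im\lambda\rvert}$, not $\re^{(1-a)\lvert\Im\lambda\rvert}$; this is precisely why the hypotheses single out the endpoint $b$ ($k$ left-continuous there, $k(b)\neq0$), and your ``$d\in\{1-a,1\}$'' does not match the lower bound $\lvert\Delta(\lambda)\rvert\gtrsim\re^{b\lvert\Im\lambda\rvert}/\lvert\lambda\rvert$ that the Lemma~\ref{lem:Zeros} argument actually gives. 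Second, and more seriously: even with the rates corrected, your steps (2)--(3) yield only $\zeta^-/\Delta=\mathcal{O}(1)$, because the numerator bound $\re^{b\lvert\Im\lambda\rvert}/\lvert\lambda\rvert$ divided by the denominator bound $\re^{b\lvert\Im\lambda\rvert}/\lvert\lambda\rvert$ loses the factor $\lvert\lambda\rvert^{-1}$ entirely --- the $1/\lvert\lambda\rvert$ you gain from integrating $\phi$ by parts is exactly cancelled by the $\lvert\lambda\rvert$ sitting in the lower bound for $\lvert\Delta\rvert$. You notice this (``does not decay'') but the proposed repair via ``exploiting the cosine/sine structure'' of $\Delta$ is never carried out and, as stated (claiming the dominant exponential is $\re^{-\Im\lambda(1-a)}$), is incorrect.

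The missing idea is structural, not a sharper estimate: the boundary term at $z=1$ produced by the integration by parts in $\zeta^-$ is \emph{exactly} $-\phi(1)\Delta(\lambda)/\lambda$, a scalar multiple of $\Delta$ itself, so its contribution to the ratio is identically $-\phi(1)/\lambda=\mathcal{O}(\lvert\lambda\rvert^{-1})$ with no envelope-matching required. The two remaining pieces must then be handled separately: the term $\frac{1}{\lambda}\int_0^1K(y)\phi(y)\D y$ gives $\mathcal{O}(\re^{-b\lvert\Im\lambda\rvert})$ against $\Delta$, and the $\phi'$ term requires a \emph{second} integration by parts in $y$ against $\D J$ (with $J$ the zero extension of $\lvert K\rvert$, using bounded variation again) to extract an extra factor $1/\lvert\Im\lambda\rvert$ from $\int_0^1\lvert K(y)\rvert\re^{-\lvert\Im\lambda\rvert y}\D y$, giving $\mathcal{O}(\lvert\lambda\rvert^{-2})$. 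Your intuition for the $\zeta^+$ half --- that the surviving endpoint term forbids decay and that $b=1$ is needed to prevent exponential growth of the ratio --- is correct in spirit, but it inherits the same imprecision and would need the analogous term-by-term treatment to be a proof.
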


Proofs of lemmata~\ref{lem:Zeros} and~\ref{lem:Decay} are given in appendix~\ref{sec:AppProofs}.

\section{Uniqueness} \label{sec:Unique}

\subsection{Fokas transform method for heat equation on $[0,1]$. Global relation and Ehrenpreis form (stage 1)} \label{ssec:FTM.FiniteHeat}

Suppose $q:[0,1]\times[0,T]\to\C$ satisfies the partial differential equation~\eqref{eqn:INVP.PDE}, initial condition~\eqref{eqn:INVP.IC}, and the regularity conditions of theorem~\ref{thm:Solution}. For $(x,t)\in[0,1]\times[0,T]$ and a spectral parameter $\lambda\in\C$, we define the functions
\begin{align*}
	X(x,t,\lambda) &= \re^{-i\lambda x+\lambda^2t}q(x,t) \\
	Y(x,t,\lambda) &= \re^{-i\lambda x+\lambda^2t}\left[ \partial_x + i\lambda \right]q(x,t),
\end{align*}
which have the property $\partial_tX-\partial_xY=0$ for all $\lambda\in\C$. Applying Green's theorem to the space-time rectangle $\Omega=(y,z)\times(0,\tau)$, we obtain
\BE \label{eqn:GreensThm}
	0=\int_{\Omega} (\partial_tX-\partial_xY) \D x\D t = \int_{\partial\Omega} (Y\D t + X\D x).
\EE
Using the notation
\begin{align*}
	\hat{q}_0(\lambda;y,z) &= \int_y^z\re^{-i\lambda\xi}q_0(\xi)\D\xi \\
	\hat{q}(\lambda,\tau;y,z) &= \int_y^z\re^{-i\lambda\xi}q(\xi,\tau)\D\xi \\
	f_0(\lambda;y,\tau) &= i\lambda\int_0^\tau\re^{\lambda^2s}q(y,s)\D s \\
	f_1(\lambda;y,\tau) &= \int_0^\tau\re^{\lambda^2s}q_x(y,s)\D s
\end{align*}
equation~\eqref{eqn:GreensThm} implies, for all $y,z$ satisfying $0\leq y \leq z \leq 1$, all $\tau\in[0,T]$ and all $\lambda\in\C$, the \emph{global relation}
\BE \label{eqn:GR}
	\hat{q}_0(\lambda;y,z) - \re^{\lambda^2\tau}\hat{q}(\lambda,\tau;y,z) = \re^{-i\lambda y}\left[f_0(\lambda;y,\tau) + f_1(\lambda;y,\tau)\right] - \re^{-i\lambda z}\left[f_0(\lambda;z,\tau) + f_1(\lambda;z,\tau)\right].
\EE
Typically, for an initial-boundary value problem, it is only necessary to have the global relation for $y=0$, $z=1$. It is clear that $\hat{q}_0(\lambda;0,1)$ is the ordinary Fourier transform of the initial datum $q_0$ extended to $\R$ by the zero function. Similarly, $\hat{q}(\lambda,t;0,1)$ is the ordinary spatial Fourier transform of the solution $q$ at time $t$. The spectral functions $f_0$ and $f_1$ are time-transforms of the solution and its derivative evaluated at position $y$; so if $y\in\{0,1\}$ then these are time-transforms of the boundary values. However, it will be essential to use this more general form of the global relation, in which $y\geq0$ and $z\leq1$, to implement stage 2 of the Fokas transform method for our nonlocal problem.

Evaluating the global relation at $\tau=t$, $y=0$, $z=1$, making $\hat{q}(\lambda,t;0,1)$ the subject of the equation and applying the inverse Fourier transform, we obtain the formula
\begin{multline} \label{eqn:Ehrenpreis.0}
	2\pi q(x,t) = \int_{-\infty}^\infty \re^{i\lambda x - \lambda^2t} \hat{q}_0(\lambda;0,1)\D\lambda - \int_{-\infty}^\infty \re^{i\lambda x - \lambda^2t}\left[f_0(\lambda;0,t) + f_1(\lambda;0,t)\right]\D\lambda \\
	+ \int_{-\infty}^\infty \re^{i\lambda (x-1) - \lambda^2t}\left[f_0(\lambda;1,t) + f_1(\lambda;1,t)\right]\D\lambda.
\end{multline}
We define the sectors
\BES
	D^\pm = \{\lambda\in\C^\pm:\Re(\lambda^2)<0\}
\EES
in the upper and lower halves of the complex $\lambda$-plane, and orient their boundaries $\partial D^\pm$ in the positive sense. Integrating by parts,
$$
	\re^{-\lambda^2t}\left[f_0(\lambda;0,t) + f_1(\lambda;0,t)\right] = \mathcal{O}\left(|\lambda|^{-1}\right),
$$
uniformly in $\arg(\lambda)$, as $\lambda\to\infty$ within $\lambda\in\overline{\C^+\setminus D^+}$. Hence, by Jordan's lemma, for $x\in(0,1]$,
$$
	\int_{\partial(\C^+\setminus D^+)}\re^{i\lambda x - \lambda^2t}\left[f_0(\lambda;0,t) + f_1(\lambda;0,t)\right]\D\lambda=0.
$$
This allows a contour deformation in the second integral of~\eqref{eqn:Ehrenpreis.0} from $\R$ to $\partial D^+$.
A similar argument allows a contour deformation from $-\R$ to $\partial D^-$ in the third integral.
Moreover, by Cauchy's theorem, as the integrands are all entire functions, we may deform the contours of integration over any bounded region.
We deform $\partial D^\pm$ to $\partial D_R^\pm$, where $R$ is as specified by lemma~\ref{lem:Zeros}. This establishes the Ehrenpreis form:
\begin{multline} \label{eqn:Ehrenpreis}
	2\pi q(x,t) = \int_{-\infty}^\infty \re^{i\lambda x - \lambda^2t} \hat{q}_0(\lambda;0,1)\D\lambda - \int_{\partial D_R^+} \re^{i\lambda x - \lambda^2t}\left[f_0(\lambda;0,t) + f_1(\lambda;0,t)\right]\D\lambda \\
	- \int_{\partial D_R^-} \re^{i\lambda (x-1) - \lambda^2t}\left[f_0(\lambda;1,t) + f_1(\lambda;1,t)\right]\D\lambda.
\end{multline}

\subsection{Nonlocal Dirichlet-to-Neumann map (stage 2)} \label{ssec:NonlocalDtoNMap}

To implement a Dirichlet-to-Neumann map, we must find expressions for each of the four boundary spectral functions which appear in Ehrenpreis form~\eqref{eqn:Ehrenpreis}:
\begin{align*}
	&f_0(\lambda;0), & &f_1(\lambda;0), & &f_0(\lambda;1), & &f_1(\lambda;1);
\end{align*}
the explicit $t$-dependence has been dropped to simplify the notation.
Note that, for $q$ satisfying the boundary condition~\eqref{eqn:INVP.BC},
\BE \label{eqn:h1.defn}
	f_1(\lambda;1) = \int_0^t \re^{\lambda^2s}g_1(s)\D s =: h_1(\lambda)
\EE
is known explicitly in terms of the boundary datum $g_1$.
We now assume that $q$ satisfies both the boundary condition and the nonlocal condition~\eqref{eqn:INVP.NC} and construct a linear system which may be solved for the remaining three spectral functions.

Evaluating the global relation~\eqref{eqn:GR} at $y=0$, $z=1$, $\tau=t$ we obtain the \emph{global relation of two-point type}
\BE \label{eqn:GR.usual}
	f_0(\lambda;0) + f_1(\lambda;0) = \re^{-i\lambda}f_0(\lambda;1) + \re^{-i\lambda}h_1(\lambda) + \hat{q}_0(\lambda;0,1) - \re^{\lambda^2t}\hat{q}(\lambda,t;0,1).
\EE
The global relation of two-point type relates the three unknown quantities with data of the problem and also the Fourier transform of $q$ at time $t$. We proceed, treating the latter as if it were a datum, and show afterwards that it does not contribute to our solution representation.

When applying the Fokas transform method to an initial-boundary value problem for the heat equation, a second boundary condition would be available, specifying some linear combination of the boundary spectral functions, which could be used to reduce the global relation of two-point type to an equation in only two unknowns.
For our problem, we have nonlocal condition~\eqref{eqn:INVP.NC} in place of a second boundary condition.
Applying the time transform to the nonlocal condition, we obtain
\BES
	\int_0^1 K(y) f_0(\lambda;y)\D y = i\lambda \int_0^t \re^{\lambda^2s} \int_0^1 K(y) q(y,s) \D y \D s = i\lambda \int_0^t \re^{\lambda^2s} g_0(s) \D s =: h_0(\lambda).
\EES
The new datum $h_0$ does not appear in equation~\eqref{eqn:GR.usual}, so it cannot be used to reduce that equation.
However it is essential that we use the nonlocal condition in some way in constructing the Dirichlet-to-Neumann map; otherwise, it would be possible to implement a Dirichlet-to-Neumann map with only the one boundary condition~\eqref{eqn:INVP.BC}, thence to solve an underspecified initial-boundary value problem.
We apply another evaluation operation to the global relation~\eqref{eqn:GR} to obtain an equation linking $h_0$ to one of our unknowns. Specifically, we evaluate equation~\eqref{eqn:GR} at $z=1$, multiply by $\re^{i\lambda y}K(y)$ and integrate from $0$ to $1$ in $y$:
\begin{multline} \label{eqn:GR.nonlocal}
	\int_0^1 K(y) f_1(\lambda;y) \D y - f_0(\lambda;1) \re^{-i\lambda} \int_0^1 \re^{i\lambda y} K(y) \D y =
	-h_0(\lambda) + h_1(\lambda)\re^{-i\lambda}\int_0^1 \re^{i\lambda y} K(y) \D y +{} \\
	\int_0^1 \re^{i\lambda y} K(y)\hat{q}_0(\lambda;y,1)\D y - \re^{\lambda^2 t}\int_0^1 \re^{i\lambda y} K(y)\hat{q}(\lambda,t;y,1)\D y.
\end{multline}
This \emph{global relation of nonlocal type} is a linear equation in the spectral function $f_0(\lambda;1)$ and the new unknown $\int_0^1 K(y) f_1(\lambda;y) \D y$.

Observe that the spectral functions and spectral data have particularly simple symmetry properties:
\begin{align*}
	f_0(-\lambda;0) &= -f_0(\lambda;0), & f_0(-\lambda;1) &= -f_0(\lambda;1), & h_0(-\lambda) &= -h_0(\lambda), \\
	f_1(-\lambda;0) &= f_1(\lambda;0), & h_1(-\lambda) &= h_1(\lambda) & f_1(-\lambda;y) &= f_1(\lambda;y).
\end{align*}
This means that by applying the maps $\lambda\mapsto\lambda$ and $\lambda\mapsto-\lambda$ to the global relation of nonlocal type, we obtain a system of two linear equations for $f_0(\lambda;1)$ and $\int_0^1 K(y) f_1(\lambda;y) \D y$. Having solved this system for $f_0(\lambda;1)$, the global relation of two-point type yields an expression for $f_0(\lambda;0) + f_1(\lambda;0)$. It is possible to apply the maps $\lambda\mapsto\lambda$ and $\lambda\mapsto-\lambda$ to the global relation of two-point type, hence solve for each of these spectral functions separately, but, as it is this sum of spectral functions that appears in the Ehrenpreis form, it is unnecessary to do so.

Explicitly, substituting into the Ehrenpreis form~\eqref{eqn:Ehrenpreis}, this yields
\begin{multline} \label{eqn:Solution.with.qhat}
	2\pi q(x,t) = \int_{-\infty}^{\infty}\re^{i\lambda x-\lambda^2t}\hat{q}_0(\lambda)\D\lambda \\
	- \int_{\partial D_R^+}\re^{i\lambda x-\lambda^2t}\frac{\zeta^+(\lambda;q_0) + H(\lambda;g_0,g_1,t)}{\Delta(\lambda)}\D\lambda
	- \int_{\partial D_R^-}\re^{i\lambda x-\lambda^2t}\frac{\re^{-i\lambda}\zeta^-(\lambda;q_0) + H(\lambda;g_0,g_1,t)}{\Delta(\lambda)}\D\lambda \\
	+ \int_{\partial D_R^+}\re^{i\lambda x}\frac{\zeta^+(\lambda;q(\cdot,t))}{\Delta(\lambda)}\D\lambda
	+ \int_{\partial D_R^-}\re^{i\lambda(x-1)}\frac{\zeta^-(\lambda;q(\cdot,t))}{\Delta(\lambda)}\D\lambda,
\end{multline}
where $\zeta^\pm$, $\Delta$, and $H$ are as defined in theorem~\ref{thm:Solution}.

Note that we have split each of the second and third integrands of the Ehrenpreis form into two parts. It should be justified that each of these yields a convergent integral. Observe that each integrand is a meromorphic function, whose only poles are zeros of the denominator $\Delta$. By lemma~\ref{lem:Zeros}, all zeros of $\Delta$ lie to the right of $\partial D_R^\pm$. We will use lemma~\ref{lem:Decay} to show that the fourth and fifth integrals of equation~\eqref{eqn:Solution.with.qhat} each evaluate to $0$, which also justifies convergence of the second and third integrals.

It is immediate from Jordan's lemma and lemmata~\ref{lem:Decay} and~\ref{lem:Zeros} that the fifth integral of equation~\eqref{eqn:Solution.with.qhat} evaluates to zero for all $x\in[0,1)$. For the fourth integral, observe that by lemma~\ref{lem:Decay}
\BE \label{eqn:Awkward.Decay}
	\re^{i\lambda x/2}\frac{\zeta^+(\lambda;\phi)}{\Delta(\lambda)} = \mathcal{O}\left( \re^{-x|\lambda|/2\sqrt{2}} \right)
\EE
uniformly in $\arg(\lambda)$ as $\lambda\to\infty$ from within $\overline{D_R^+}$, provided $x\in(0,1]$ and $K(0)\neq0$. Hence, by Jordan's lemma and lemma~\ref{lem:Zeros}, the fourth integral of equation~\eqref{eqn:Solution.with.qhat} evaluates to zero for all $x\in(0,1]$.

Finally, we argue that $\tau>t$ may replace $t$ in the fourth argument of $H$, by showing that
\BE \label{eqn:H.asymp}
	\int_{\partial D_R^\pm} \re^{i\lambda x-\lambda^2t}\frac{H(\lambda;g_0,g_1,\tau)-H(\lambda;g_0,g_1,t)}{\Delta(\lambda)}\D\lambda = 0.
\EE
By definition,
$$
	\re^{i\lambda}\left[H(\lambda;g_0,g_1,\tau)-H(\lambda;g_0,g_1,t)\right] = i\lambda \int_t^\tau \re^{\lambda^2s}g_0(s)\D s + \int_0^1 K(y)\re^{i\lambda(1-y)}\D y \int_t^\tau \re^{\lambda^2s}g_1(s)\D s.
$$
Integration by parts in $s$ and equation~\eqref{eqn:Delta.asymp} establishes that, as $\lambda\to\infty$ from within $\overline{D_R^-}$,
$$
	\frac{\re^{i\lambda}\left[H(\lambda;g_0,g_1,\tau)-H(\lambda;g_0,g_1,t)\right]}{\Delta(\lambda)} = \mathcal{O}(|\lambda|^{-2}),
$$
uniformly in $\arg(\lambda)$. Hence, by Jordan's lemma, the $\partial D_R^-$ version of equation~\eqref{eqn:H.asymp} holds. Similarly, as $\lambda\to\infty$ from within $\overline{D_R^+}$,
$$
	\frac{\left[H(\lambda;g_0,g_1,\tau)-H(\lambda;g_0,g_1,t)\right]}{\Delta(\lambda)} = \mathcal{O}(1),
$$
uniformly in $\arg(\lambda)$. Once again, we exploit the exponential decay of $\re^{i\lambda x/2}$ as $\lambda\to\infty$ from within $\overline{D_R^+}$ and Jordan's lemma to conclude that the $\partial D_R^+$ version of equation~\eqref{eqn:H.asymp} holds.

We have established, under the assumption of existence of a solution $q$ with appropriate regularity, that the solution is unique and may be represented by equation~\eqref{eqn:Solution.q}.

\begin{rmk} \label{rmk:AltGR}
	We used the global relation of two-point type~\eqref{eqn:GR.usual} and the global relation of nonlocal type~\eqref{eqn:GR.nonlocal}, under the maps $\lambda\mapsto\pm\lambda$, to obtain a system of equations solvable for the spectral functions. It is possible to obtain a \emph{second global relation of nonlocal type},
	\begin{multline*}
		\left[ f_0(\lambda;0) + f_1(\lambda;0) \right] \int_0^1 \re^{i\lambda z} K(z) \D z - \int_0^1 K(z) f_1(\lambda;z)\D z \\
		= h_0(\lambda)
		+ \int_0^1 \re^{i\lambda y} K(z)\hat{q}_0(\lambda;0,z)\D z - \re^{\lambda^2 t}\int_0^1 \re^{i\lambda y} K(z)\hat{q}(\lambda,t;0,z)\D z,
	\end{multline*}
	by evaluating the global relation~\eqref{eqn:GR} at $y=0$, multiplying by $\re^{i\lambda z}K(z)$ and integrating from $0$ to $1$ in $z$.
	Any two of the global relation of two-point type, the global relation of nonlocal type, and the second global relation of nonlocal type, under the maps $\lambda\mapsto\pm\lambda$, provide a system of equations solvable for the spectral functions, providing somewhat different solution representations.
	It is not clear that there is any advantage in choosing any particular system, except that the one chosen above yields a simpler expression.
\end{rmk}

\begin{rmk} \label{rmk:SidewaysHeat}
	A crucial component of this argument is the requirement of lemma~\ref{lem:Decay} that $0$ be in the support of $K$.
	If instead the support of $K$ is $[y,z]$ for some $y>0$, then the Jordan's lemma argument can only be made for $x\in(y,1]$, and our solution representation is only valid for such $x$.
	Solving the problem for all $x\in(y,1]$, we could then set up the following initial-boundary value problem for $u$:
	\begin{align*}
		[\partial_t-\partial_x^2]u(x,t) &= 0 & (x,t) &\in (0,y)\times(0,T), \\
		u(x,0) &= q_0(x) & x &\in [0,y], \\
		u(y,t) &= q(y,t) & t &\in [0,T], \\
		u_x(y,t) &= q_x(y,t) & t &\in [0,T].
	\end{align*}
	However, this represents a sideways problem for the heat equation, which is ill-posed~\cite{Ber1999a}.
	Therefore, lacking any boundary / nonlocal data for $q$ to the left of $y$, it is unsurprising that it is impossible to directly solve for $q$ to the left of $y$.
\end{rmk}

\begin{rmk} \label{rmk:K0at0}
	Supposing the enhanced regularity criterion $K\in C^2[0,1]$, we can explicitly derive the leading order term in the asymptotic expansion of $\zeta^+(\la;\phi)/\Delta$, via a simple integration by parts argument.
	Indeed, as $\la\to\infty$ from within $\overline{D^+}$,
	\BE \label{eqn:DecayWithHigherKRegularity.+}
		\frac{\zeta^+(\lambda;\phi)}{\Delta(\lambda)} = \frac{-1}{K(0)}\int_0^1K(y)\phi(y)\D y + \mathcal{O}(|\lambda|^{-1}),
	\EE
	uniformly in $\arg(\lambda)$.
	Therefore, the $\mathcal{O}(1)$ result in equation~\eqref{eqn:DecayLem.+} is optimal in the sense that ${o}(1)$ is generally false.

	The Jordan's lemma argument in $D_R^+$ requires that one exploit $\re^{i\lambda x} = \re^{i\lambda x/2}\re^{i\lambda x/2}$ and use the fact that, when $\lambda\to\infty$ from within $\overline{D_R^+}$, necessarily $\Im(\lambda)\to\infty$ to see that one of these factors gives exponential decay in $|\lambda|$, while the other factor plays the role of the exponential kernel for Jordan's lemma.
	This contrasts with the Jordan's lemma argument in $D_R^-$, which did not require such an approach, because lemma~\ref{lem:Decay} already provides decay of $\zeta^-(\la;\phi)/\Delta(\la)$ in $D_R^-$.
	For initial-boundary value problems studied via Fokas method, this ``splitting $\re^{i\lambda x}$'' argument is always available for the heat equation, but is not available when studying problems of odd spatial order, or problems for the linear Schr\"{o}dinger equation.
	However, for initial-boundary value problems, it is never necessary to use the ``splitting $\re^{i\lambda x}$'' argument, as the result corresponding to lemma~\ref{lem:Decay} always takes the form
	``as $\lambda\to\infty$ from within $\overline{D_R^\pm}$,
	$\zeta^\pm(\lambda;\phi)/\Delta(\lambda) = \mathcal{O}(|\lambda|^{-1})$,
	uniformly in $\arg(\lambda)$'',
	for $\zeta^\pm$, $\Delta$, $D_R^\pm$ defined appropriately for the problem at hand.

	It is an open question whether a generalisation of the non-decaying asymptotic formula in lemma~\ref{lem:Decay} will present a serious obstruction to generalisation of the Fokas method to initial-nonlocal value problems for other equations, but it is expected that adding $\delta_0(x)$ to $K$ would circumvent this issue.
\end{rmk}

\begin{rmk} \label{rmk:Spectral}
	The solution representation obtained via the Fokas transform method takes the form of contour integrals.
	The Fokas method has been used to solve initial-boundary value problems for which classical Fourier series solution representations do not exist, and to determine well-posedness criteria for problems of high spatial order~\cite{Pel2004a,Smi2012a,Smi2013a}.
	The meaning of the solution representation in terms of the spectral theory of the spatial two-point differential operator has been studied~\cite{Pel2005a,PS2013a,Smi2015a,FS2016a,PS2016a}.
	It was shown that the integrals in the solution representation are a new species of spectral object.
	The spectral theory of the nonlocal heat operator is now open to the same analysis as the two-point operators, but such analysis is beyond the scope of the present work.
\end{rmk}

\begin{rmk} \label{rmk:Nonlocal.Boundary.Map}
	In order to obtain a formula for the Dirichlet boundary value $\gamma(t):=q(0,t)$, one may simply evaluate expression~\eqref{eqn:Solution} at $x=0$. In the case of initial-interface value problems, it is known that one may obtain a formula for (an appropriate equivalent of) $\gamma(t)$ without use of the Ehrenpreis form~\cite{DS2016a}. We argue that such an approach cannot be applied to our problem.

	By the validity of the usual inverse Fourier transform, for $\tau>t$,
	\BES
		\gamma(t) = q(0,t) = \frac{1}{2\pi}\int_{-\infty}^\infty \re^{i\rho t} \int_0^\tau \re^{-i\rho s} q(0,s) \D s \D\rho.
	\EES
	Applying the change of variables $\lambda^2=-i\rho$, $\lambda=i\sqrt{i\rho}$ (for the principal branch of the square root), we obtain a formula for $\gamma(t)$ in terms of $f_0(\lambda;0)$:
	\BE \label{eqn:gamma.t}
		\gamma(t) = \frac{1}{\pi} \int_{\partial D_R^+} \re^{-\lambda^2 t} f_0(\lambda;0) \D\lambda.
	\EE
	Solving the linear system described above, we find
	\begin{multline*}
		2f_0(\lambda;0) = \frac{\zeta^+(\lambda;q_0) - \zeta^+(-\lambda;q_0)}{\Delta(\lambda)} + \frac{H(\lambda;g_0,g_1,\tau) - H(-\lambda;g_0,g_1,\tau)}{\Delta(\lambda)} + \frac{\zeta^+(\lambda;q(\cdot;\tau)) - \zeta^+(-\lambda;q(\cdot;\tau))}{\Delta(\lambda)},
	\end{multline*}
	which can be substituted into equation~\eqref{eqn:gamma.t} to obtain an implicit expression for $\gamma(t)$.
	However, because $\zeta^+/\Delta$ does not decay as $\lambda\to\infty$ (see lemma~\ref{lem:Decay} and the first paragraph of remark~\ref{rmk:K0at0}), it is not possible to remove the effects of $q(\cdot;\tau)$ from the representation. Therefore, it is not possible to obtain an effective representation of $\gamma(t)$ in this way.
\end{rmk}

\begin{rmk} \label{rmk:Numerics}
	A full numerical implementation of the Fokas transform method for initial-nonlocal value problems is beyond the scope of this work.
	However, we provide a plot of $q(x,t)$ for a particular value of $K$ in figure~\ref{fig:plot.q}.
	The code used to produce this plot may be found at~\cite{Smi2017a}.
	\begin{figure}
		\centering
		\includegraphics[width=0.3\textwidth]{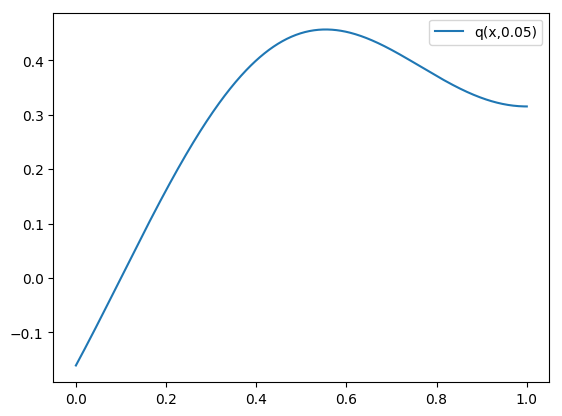}
		\includegraphics[width=0.4\textwidth]{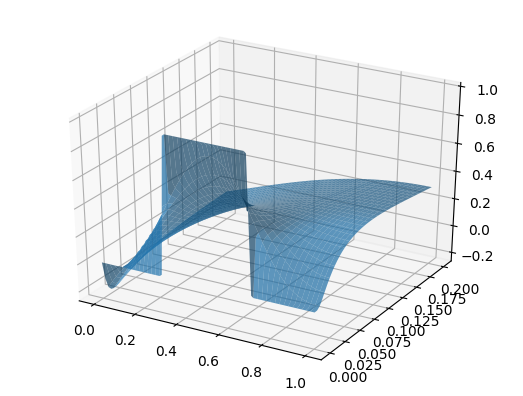}
		\caption{
			Solution of the homogeneous problem $t=0.05$, with box initial datum, $K=1$ on $(0,0.2)$ and $K=0$ elsewhere.
			The left figure is at time $t=0.05$.
		}
		\label{fig:plot.q}
	\end{figure}
	As figure~\ref{fig:plot.q} demonstrates, it is possible to choose $K$ such that, even with $g_0=g_1=0$, positive initial data yields a solution which is, for some $x$, negative at positive time.
	Of course, this would not occur if the homogeneous nonlocal condition was replaced with a homogeneous Dirichlet condition, but perhaps it is not entirely surprising.
	Indeed, viewing the nonlocal condition as a ``smeared out'' Dirichlet condition, we might understand that the smearing extends both out of and into the interval.
	Therefore, the approximated Dirichlet condition is not at $x=0$ but at $x\approx 0.1$, and $q(0.1,t)\approx0$ as expected.
\end{rmk}

\section{Existence (stage 3)} \label{sec:Exist}

We show that $q$ as defined in theorem~\ref{thm:Solution} solves the initial-nonlocal problem~\eqref{eqn:INVP}. Exploiting the same argument as was used at the end of stage~2, we note that the formula for $q$ is independent of choice of $\tau\in[t,T]$.

Defining
$$
	\Omega^\epsilon = \{(x,t)\in[0,1]\times[0,T]: \|(x,t)-(0,0)\|\geq\epsilon \mbox{ and } \|(x,t)-(1,0)\|\geq\epsilon]\},
$$
it is clear that, for all $\epsilon>0$, the integrals in equation~\eqref{eqn:Solution.q} converge uniformly in $(x,t)$ on $\Omega^\epsilon$. Moreover, all partial derivatives of $q$ exist on the interior of $\Omega^\epsilon$ and are given by taking the (uniformly convergent on any closed subset of the interior of $\Omega^\epsilon$) integrals of corresponding partial derivatives of the integrands.

Using $\tau>t$ in expression~\eqref{eqn:Solution.q}, the $x$ and $t$ dependence of $q$ is contained within the exponential kernel of each term. It follows immediately that $q$ satisfies the PDE~\eqref{eqn:INVP.PDE}.

Suppose $t=0$ and $x\in(0,1)$. Evaluating expression~\eqref{eqn:Solution.q} with $\tau=0$, we find $H=0$. By lemma~\ref{lem:Decay}, the second integrand in expression~\eqref{eqn:Solution.q} is $\re^{i\lambda x/2}\mathcal{O}(\re^{-|\lambda|/\sqrt{2}})$, and the third integrand is $\re^{i\lambda(x-1)}\mathcal{O}(|\lambda|^{-1})$ as $\lambda\to\infty$ from within $\overline{D_R^+}$ and within $\overline{D_R^-}$, respectively. Hence, by Jordan's lemma and lemma~\ref{lem:Zeros}, the second and third integrals of expression~\eqref{eqn:Solution.q} evaluate to $0$. The usual Fourier inversion theorem implies that $q$ satisfies the initial condition.

Suppose $t\in(0,T)$ and $\tau>t$.
By the uniform convergence of each integral in expression~\eqref{eqn:Solution.q},
\begin{multline} \label{eqn:Existence.Proof:qx1}
	q_x(1,t) = \frac{i}{2\pi} \lim_{x\to1^-} \left[ \int_{-\infty}^{\infty}\lambda\re^{i\lambda x-\lambda^2t}\hat{q}_0(\lambda)\D\lambda - \int_{\partial D_R^+}\lambda\re^{i\lambda x-\lambda^2t}\frac{\zeta^+(\lambda;q_0) + H(\lambda;g_0,g_1,\tau)}{\Delta(\lambda)}\D\lambda \right. \\
	\left. - \int_{\partial D_R^-}\lambda\re^{i\lambda x-\lambda^2t}\frac{\re^{-i\lambda}\zeta^-(\lambda;q_0) + H(\lambda;g_0,g_1,\tau)}{\Delta(\lambda)}\D\lambda \right].
\end{multline}

It is straightforward from the definitions of $\zeta^\pm$ and $\Delta$ to show that, for all $\lambda\in\C$,
\BE \label{eqn:zeta.cancel}
	\zeta^+(\lambda;\phi)-\re^{-i\lambda}\zeta^-(\lambda;\phi) = \Delta(\lambda) \int_0^1\re^{-i\lambda z}\phi(z)\D z.
\EE
Formally, it appears that one may apply equation~\eqref{eqn:zeta.cancel} to rewrite $\zeta^+$ in terms of $\zeta^-$ in equation~\eqref{eqn:Existence.Proof:qx1} and observe some cancellation. However certain resulting integrals would not converge. Instead, one must first make a contour deformation.

We define $\gamma_R^\pm=\{z\pm iR/\sqrt{2}:z\in\R\}$ oriented so that the strip $|\Im(\lambda)|<R/\sqrt{2}$ (which, by lemma~\ref{lem:Zeros}, contains all zeros of $\Delta$) lies to the right of each contour $\gamma_R^\pm$. By lemma~\ref{lem:Decay}, and integrating by parts in $x$, as $\lambda\to\infty$ from within $\overline{\{\lambda\in\C^+\setminus D_R^+:\Im(\lambda)>R/\sqrt{2}\}}$,
\BES
	\lambda \re^{i\lambda x}\frac{\zeta^+(\lambda;q_0)}{\Delta(\lambda)} = \mathcal{O}\left(|\lambda|\re^{-|\lambda| x/\sqrt{2}}\right),
\EES
uniformly in $\arg(\lambda)$, and, as $\lambda\to\infty$ from within $\overline{\{\lambda\in\C^-\setminus D_R^-:\Im(\lambda)<-R/\sqrt{2}\}}$,
\BES
	\lambda \re^{i\lambda (x-1)} \frac{\zeta^-(\lambda;q_0)}{\Delta(\lambda)} = \mathcal{O}\left(\re^{-|\lambda| (1-x)/\sqrt{2}}\right)
\EES
uniformly in $\arg(\lambda)$. Hence, by Jordan's lemma with exponential kernel $\re^{-\lambda^2t}$, subsequently applying equation~\eqref{eqn:zeta.cancel}, and evaluating the limit,
\begin{multline} \label{eqn:Existence.Proof:qx2}
	q_x(1,t) =
		\frac{i}{2\pi} \left\{\int_{-\infty}^{\infty} - \int_{\gamma_R^+} \right\} \lambda\re^{i\lambda-\lambda^2t}\hat{q}_0(\lambda)\D\lambda
		- \frac{i}{2\pi} \left\{\int_{\gamma_R^+} + \int_{\gamma_R^-}\right\}\lambda\re^{-\lambda^2t}\frac{\zeta^-(\lambda;q_0)}{\Delta(\lambda)}\D\lambda \\
		- \frac{i}{2\pi} \left\{\int_{\partial D_R^+} + \int_{\partial D_R^-}\right\} \lambda\re^{i\lambda-\lambda^2t}\frac{H(\lambda;g_0,g_1,\tau)}{\Delta(\lambda)}\D\lambda.
\end{multline}
Using Jordan's lemma with exponential kernel $\re^{-\lambda^2t/2}$, the first pair of integrals in equation~\eqref{eqn:Existence.Proof:qx2} cancel.
Applying a change of variables $\lambda\mapsto-\lambda$ in the fourth integral of equation~\eqref{eqn:Existence.Proof:qx2} shows that the third and fourth integrals cancel. The remaining terms in~\eqref{eqn:Existence.Proof:qx2} simplify to
\BE
	q_x(1,t) = \frac{-1}{2\pi}\int_{\partial D_R^+}2i\lambda\re^{-\lambda^2t} \int_0^\tau \re^{\lambda^2s}g_1(s)\D s \D \lambda.
\EE
Applying a change of variables $\lambda^2=-i\rho$, $\lambda=i\sqrt{i\rho}$ (for the principal branch of the square root), we obtain
\BES
	2\pi q_x(1,t) = -\int_{-\R}\re^{i\rho t} \int_0^\tau \re^{-i\rho s}g_1(s)\D s \D \rho.
\EES
Hence, by the usual Fourier inversion theorem, $q$ satisfies the boundary condition~\eqref{eqn:INVP.BC}.

It only remains to show that $q$, as defined by equation~\eqref{eqn:Solution.q}, satisfies the nonlocal condition~\eqref{eqn:INVP.NC} for $t\in(0,t)$, $\tau>t$. We integrate equation~\eqref{eqn:Solution.q} against $K(x)$ from $x=0$ to $1$ and apply uniform convergence to obtain
\begin{multline} \label{eqn:Existence.Proof:Kq0}
	2\pi\int_0^1K(x)q(x,t)\D x =
		\int_{-\infty}^\infty \int_0^1K(x)\re^{i\lambda x}\D x\; \re^{-\lambda^2t}\hat{q}_0(\lambda)\D\lambda \\
		-\left\{ \int_{\partial D_R^+}\frac{\zeta^+(\lambda;q_0)}{\Delta(\lambda)} + \int_{\partial D_R^-}\frac{\re^{-i\lambda}\zeta^-(\lambda;q_0)}{\Delta(\lambda)} \right\} \int_0^1K(x)\re^{i\lambda x}\D x\; \re^{-\lambda^2t} \D\lambda \\
		-\left\{ \int_{\partial D_R^+} + \int_{\partial D_R^-} \right\} \int_0^1K(x)\re^{i\lambda x}\D x\; \re^{-\lambda^2t} \frac{H(\lambda;g_0,g_1,\tau)}{\Delta(\lambda)}\D\lambda.
\end{multline}

By lemma~\ref{lem:Decay}, and applying the Riemann-Lebesgue lemma to the integral, as $\lambda\to\infty$ from within $\overline{\{\lambda\in\C^+\setminus D_R^+:\Im(\lambda)>R/\sqrt{2}\}}$,
$$
	\frac{\zeta^+(\lambda;q_0)}{\Delta(\lambda)} \int_0^1K(x)\re^{i\lambda x}\D x = o(1),
$$
uniformly in $\arg(\lambda)$, and, as $\lambda\to\infty$ from within $\overline{\{\lambda\in\C^-\setminus D_R^-:\Im(\lambda)<-R/\sqrt{2}\}}$,
$$
	\frac{\re^{-i\lambda}\zeta^-(\lambda;q_0)}{\Delta(\lambda)} \int_0^1K(x)\re^{i\lambda x}\D x = o(|\lambda|^{-1}),
$$
uniformly in $\arg(\lambda)$.
Hence, by Jordan's lemma with exponential kernel $\re^{-\lambda^2t}$, the second line of equation~\eqref{eqn:Existence.Proof:Kq0} can be rewritten as
$$
	-\left\{ \int_{\gamma_R^+}\frac{\zeta^+(\lambda;q_0)}{\Delta(\lambda)} + \int_{\gamma_R^-}\frac{\re^{-i\lambda}\zeta^-(\lambda;q_0)}{\Delta(\lambda)} \right\} \int_0^1K(x)\re^{i\lambda x}\D x\; \re^{-\lambda^2t} \D\lambda.
$$
Applying equation~\eqref{eqn:zeta.cancel},
\begin{multline} \label{eqn:Existence.Proof:Kq1}
	2\pi\int_0^1K(x)q(x,t)\D x =
		\left\{ \int_{-\infty}^\infty - \int_{\gamma_R^+} \right\} \int_0^1K(x)\re^{i\lambda x}\D x\; \re^{-\lambda^2t}\hat{q}_0(\lambda)\D\lambda \\
		-\left\{ \int_{\gamma_R^+} + \int_{\gamma_R^-} \right\} \frac{\zeta^-(\lambda;q_0)}{\Delta(\lambda)} \int_0^1K(x)\re^{i\lambda (x-1)}\D x\; \re^{-\lambda^2t} \D\lambda \\
		-\left\{ \int_{\partial D_R^+} + \int_{\partial D_R^-} \right\} \int_0^1K(x)\re^{i\lambda x}\D x\; \re^{-\lambda^2t} \frac{H(\lambda;g_0,g_1,\tau)}{\Delta(\lambda)}\D\lambda.
\end{multline}

As $\lambda\to\infty$ from within $\overline{\{\lambda\in\C^+:\Im(\lambda)<R/\sqrt{2}\}}$,
$$
	\hat{q}_0(\lambda) \int_0^1K(x)\re^{i\lambda x}\D x = o(|\lambda|^{-1}),
$$
uniformly in $\arg(\lambda)$. Hence, by Jordan's lemma with exponential kernel $\re^{-\lambda^2t}$, the first pair of integrals in equation~\eqref{eqn:Existence.Proof:Kq1} cancel.
A change of variables $\lambda\mapsto-\lambda$ in the fourth integral on the right of equation~\eqref{eqn:Existence.Proof:Kq1} shows that the sum of the third and fourth terms on the right of equation~\eqref{eqn:Existence.Proof:Kq1} evaluates to
$$
	-2\int_{\gamma_R^+} \re^{-\lambda^2t} \zeta^-(\lambda;q_0) \D\lambda = -2\int_{-\infty}^\infty \re^{-\lambda^2t} \zeta^-(\lambda;q_0) \D\lambda,
$$
where the latter equality is established by applying Jordan's lemma again with exponential kernel $\re^{-\lambda^2t/2}$. This integral converges and the integrand is odd, so it evaluates to $0$.
Therefore
\begin{multline*}
	2\pi \int_0^1 K(x) q(x,t)\D x = - \int_{\partial D_R^+} \frac{\re^{-\lambda^2t}}{\Delta(\lambda)} \Bigg(
		 \int_0^\tau\re^{\lambda^2s} g_0(s)\D s \; 2i\lambda \int_0^1K(x)\cos([1-x]\lambda)\D x \\
		+ \int_0^\tau\re^{\lambda^2s} g_1(s)\D s \left[ \int_0^1K(x)\re^{i\lambda x}\D x \int_0^1K(y)\re^{-i\lambda y}\D y - \int_0^1K(y)\re^{i\lambda y}\D y \int_0^1K(x)\re^{-i\lambda x}\D x \right]
	\Bigg)
	\D \lambda,
\end{multline*}
and the terms in the bracket cancel.
Hence, by the change of variables $\lambda^2=-i\rho$, $\lambda=i\sqrt{i\rho}$, $q$ satisfies the nonlocal condition~\eqref{eqn:INVP.NC}.

This completes the proof of theorem~\ref{thm:Solution}.

\begin{proof}[Proof of corollary~\ref{cor:RegularityOfSolution}]
	Although the three paragraphs containing equations~\eqref{eqn:Existence.Proof:qx1}--\eqref{eqn:Existence.Proof:qx2} were used to show that the boundary condition is satisfied, they also contain an argument that $q_x(1,t)$, interpreted as $\lim_{x\to1^-}q_x(x,t)$, converges.
	The argument that $q_x(0,t)$, interpreted as $\lim_{x\to0^+}q_x(x,t)$, also converges is very similar.
	That $\partial_x^j\partial_t^kq(x,t)$ converges for interior $x,t$ is immediate from the exponential decay of $\re^{i\la x}$, $\re^{i\la(x-1)}$ as $\la\to\infty$ along $\partial D_R^\pm$ and the exponential decay of $\re^{-\la^2t}$ as $\la\to\infty$ along $\R$.
	Higher order spatial derivatives only insert extra factors of $i\la$ in each integrand, and any monomial in $\la$ is dominated by the exponential $\re^{i\la x}$ (or $\re^{i\la (1-x)}$), for all positive $x$.
	Therefore $q$ is infinitely differentiable in $x$, with each derivative continuously extensible to $[0,1]$.
	Moreover, each function $\partial_x^jq(x,t)$ obtained as described above is differentiable with respect to $t$, and exchanging the partial derivatives yields the same formula, so the map $t\to \partial_x^jq(x,t)$ is continuous in $t$.
	Similarly, the map $x\to \partial_t^kq(x,t)$ is continuous in $x$.
\end{proof}

\begin{rmk}
	To extend time derivatives of $q$ to $t=0$ of course requires additional differentiability of $q_0$.
	However, there is still an additional issue.
	Indeed, $q_0$ may be arbitrarily smooth on $[0,1]$ without it's zero extension to $\R$ having more than a single weak derivative.
	In such a situation, we cannot expect convergence of the inverse Fourier transform
	\BE \label{eqn:tDerivativesAt0}
		\int_{-\infty}^{\infty}\la^{2n}\re^{i\la x}\hat{q}_0(\la)\D\la,
	\EE
	which arises in $\lim_{t\to0^+}\partial_t^nq(x,t)$.
	In initial-boundary value problems studied via the Fokas method, there are two ways to get around this difficulty.
	The first is to interpret $\hat{q}_0$ as the Fourier transform of a compactly supported $C^n$ extension of $q_0$, so that the integral converges.
	An alternative approach~\cite{BT2015a} is to fix $\tau>0$ (rather than allowing $\tau\to0$ with $t$) and interpret the singular integral~\eqref{eqn:tDerivativesAt0} jointly with the remaining singular integrals of $H$ about $\partial D_R^\pm$.
	The latter approach requires strong compatibility of the initial and boundary data, but permits extension of the smoothness results up to the corners $(x,t)=(0,0)$ and $(1,0)$.
	It is expected that similar results will hold for initial-nonlocal value problems, but the analysis remains open.
\end{rmk}

\section{Relation with multipoint problems} \label{sec:Multipoint}

Initial-nonlocal value problem~\eqref{eqn:INVP} is related to initial-multipoint value problems that have already been studied using the Fokas transform method~\cite{PS2015b}. In this section, we describe these relations and use the method of the present work to inform a new approach to the Dirichlet-to-Neumann map for multipoint problems.

\subsection{Nonlocal value problem~\eqref{eqn:INVP} as a limit of multipoint value problems}

We aim to approximate problem~\eqref{eqn:INVP} as an initial-multipoint problem so that the framework of~\cite{PS2015b} may be applied to solve the approximate problem. If $K$ were a piecewise linear function then the results of that paper could be applied directly to transform the nonlocal condition into an equivalent multipoint condition, but for general $K$ this is not possible. Instead, we must devise a sequence of multipoint conditions which has as its limit the nonlocal condition~\eqref{eqn:INVP.NC}. One approach would be to approximate $K$ by a sequence of piecewise linear functions, and construct the equivalent multipoint conditions in each case. However, due to the complexity of those multipoint conditions, the solution representation thus obtained is somewhat unwieldy. We prefer to take the following approach instead.

For $m\in\N$, let
\BE \label{eqn:Km.defn}
	K_m(x) = \frac{1}{m+1}\sum_{j=0}^m\delta\left(x-\frac{j}{m}\right)K\left(\frac{j}{m}\right).
\EE
Then $K_m\xrightarrow{w^\star}K$ with test functions $q$ continuous in $x$. But
\BE
	\int_0^1K_m(x)q(x,t)\D x = \frac{1}{m+1}\sum_{j=0}^m K\left(\frac{j}{m}\right) q\left(\frac{j}{m},t\right),
\EE
so nonlocal condition~\eqref{eqn:INVP.NC} is the weak-$\star$ limit of the $(m+1)$-point condition
\BE \label{eqn:INVP.MC}
	\sum_{j=0}^mb_m^jq\left(\frac{j}{m},t\right) = g_0(t), \qquad t\in[0,T],
\EE
as $m\to\infty$, where
\BE
	b_m^j = \frac{1}{m+1}K\left(\frac{j}{m}\right).
\EE

\subsection{Solution of multipoint value problems}

In this section, we apply the results of~\cite{PS2015b} to solve the initial-multipoint value problem
\begin{subequations} \label{eqn:IMVP}
\begin{align} \label{eqn:IMVP.PDE}
	[\partial_t-\partial_x^2]q(x,t) &= 0 & (x,t) &\in (0,1)\times(0,T), \\ \label{eqn:IMVP.IC}
	q(x,0) &= q_0(x) & x &\in[0,1], \\ \label{eqn:IMVP.BC}
	q_x(1,t) &= g_1(t) & t &\in [0,T], \\ \label{eqn:IMVP.MC}
	\int_0^1 K_m(x)q(x,t) \D x &= g_0(t) & t &\in [0,T],
\end{align}
\end{subequations}
for finite $m$, where $K_m$ is defined by equation~\eqref{eqn:Km.defn}, so~\eqref{eqn:IMVP.MC} is really a multipoint condition rather than a genuine nonlocal condition.

The following proposition is a direct application of~\cite{PS2015b}, with only the cancellation of a factor of $2i\lambda$, to simplify the presentation.

\begin{prop} \label{prop:IMVP.Solution}
	Suppose $m\in\N$ and $q$ satisfies initial-multipoint value problem~\eqref{eqn:INVP.PDE}--\eqref{eqn:INVP.BC},~\eqref{eqn:INVP.MC}. Then there exists $R>0$ sufficiently large that
	\begin{subequations} \label{eqn:Solution.m}
	\begin{multline} \label{eqn:Solution.m.q}
		q(x,t) = \frac{1}{2\pi}\int_{-\infty}^\infty \re^{i\lambda x-\lambda^2t} \hat{q}_0(\lambda) \D\lambda
			- \int_{\partial D_R^+} \re^{i\lambda x-\lambda^2t} \frac{F_m^+(\lambda)-\mathcal{H}_m(\lambda)}{\Delta_m(\lambda)} \D\lambda \\
			- \int_{\partial D_R^-} \re^{i\lambda x-\lambda^2t} \frac{\re^{-i\lambda}F_m^-(\lambda)-\mathcal{H}_m(\lambda)}{\Delta_m(\lambda)} \D\lambda,
	\end{multline}
	where
	\begin{align} \label{eqn:Solution.m.F+}
		F_m^+(\lambda) &= \sum_{j=0}^mb_m^j\cos\left(\left[1-\frac{j}{m}\right]\lambda\right)\int_{0}^{\frac{j}{m}}\re^{-i\lambda z}q_0(z)\D z + \sum_{j=0}^m \re^{-i\lambda\frac{j}{m}}\int_{\frac{j}{m}}^{1}\cos\left(\left[1-z\right]\lambda\right)q_0(z)\D z, \\ \label{eqn:Solution.m.F-}
		F_m^-(\lambda) &= i\sum_{j=0}^mb_m^j\int_{\frac{j}{m}}^{1}\sin\left(\left[z-\frac{j}{m}\right]\lambda\right)q_0(z)\D z, \\ \label{eqn:Solution.m.H}
		\mathcal{H}_m(\lambda) &= i\lambda\re^{-i\lambda} \int_0^\tau \re^{\lambda^2s}g_0(s)\D s + \sum_{j=0}^m\re^{-i\lambda\frac{j}{m}}b_m^j \int_0^\tau \re^{\lambda^2s}g_1(s)\D s, \\ \label{eqn:Solution.m.Delta}
		\Delta_m(\lambda) &= \sum_{j=0}^m b_m^j \cos\left(\left[1-\frac{j}{m}\right]\lambda\right),
	\end{align}
	\end{subequations}
	and $\tau\in[t,T]$.
\end{prop}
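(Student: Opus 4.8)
The plan is to derive Proposition~\ref{prop:IMVP.Solution} by specialising the general multipoint machinery of~\cite{PS2015b} to the particular $(m+1)$-point condition~\eqref{eqn:INVP.MC}, and then to simplify. Since the proposition is stated to be ``a direct application of~\cite{PS2015b}, with only the cancellation of a factor of $2i\lambda$'', the bulk of the work is bookkeeping: identifying the coefficients of that paper's linear system with the data $b_m^j$, $g_0$, $g_1$ of our problem, writing out the associated determinants, and checking that the resulting $\zeta$-type and $\Delta$-type quantities are exactly $F_m^\pm$, $\mathcal{H}_m$, $\Delta_m$ as defined in~\eqref{eqn:Solution.m.F+}--\eqref{eqn:Solution.m.Delta}.

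Concretely, first I would recall from section~\ref{ssec:FTM.FiniteHeat} the global relation~\eqref{eqn:GR} and the Ehrenpreis form~\eqref{eqn:Ehrenpreis} for the heat equation on $[0,1]$; these are independent of the non-PDE data and so apply verbatim. Next I would set up the multipoint Dirichlet-to-Neumann map exactly as in~\cite{PS2015b}: the boundary condition~\eqref{eqn:INVP.BC} again gives $f_1(\lambda;1)=h_1(\lambda)$, while the multipoint condition~\eqref{eqn:INVP.MC}, after applying the time transform $\int_0^\tau\re^{\lambda^2s}(\cdot)\D s$, yields $\sum_{j=0}^m b_m^j f_0(\lambda;j/m) = i\lambda\int_0^\tau\re^{\lambda^2 s}g_0(s)\D s =: h_0(\lambda)$. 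One then closes the system by evaluating the global relation~\eqref{eqn:GR} at $y=j/m$, $z=1$ for each $j$ (equivalently, taking the $y\le z$ global relation at the relevant nodes), forming the appropriate linear combination with weights $b_m^j$, and using the $\lambda\mapsto\pm\lambda$ symmetry of the spectral functions. Solving this linear system by Cramer's rule produces $f_0(\lambda;1)$ and $f_0(\lambda;0)+f_1(\lambda;0)$ as ratios of determinants; the common denominator is $\Delta_m(\lambda)=\sum_j b_m^j\cos([1-j/m]\lambda)$ (the $2i\lambda$ is the factor that cancels from numerator and denominator), and the numerators, after splitting the $q_0$-contributions from the $g_0,g_1$-contributions, assemble into $F_m^\pm$ and $\mathcal{H}_m$. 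Substituting these into~\eqref{eqn:Ehrenpreis} and discarding the $\hat q(\lambda,t;\cdot,\cdot)$ terms by the same Jordan's-lemma/contour-deformation argument as in section~\ref{ssec:NonlocalDtoNMap} (this uses $K_m(0)=b_m^0/\text{const}\neq0$, i.e.\ $K(0)\neq0$, and a zero-locus statement for $\Delta_m$ analogous to lemma~\ref{lem:Zeros}) gives~\eqref{eqn:Solution.m.q}. Finally the $\tau$-independence on $[t,T]$ follows from the same estimate used at the end of stage~2, applied to $\mathcal{H}_m$.

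The main obstacle is essentially notational rather than conceptual: the multipoint formalism of~\cite{PS2015b} is set up in a somewhat different normalisation (its ``$\Delta$'', ``$\zeta^\pm$'' carry a factor of $2i\lambda$ and its conditions are phrased as generic linear combinations of $q(x_j,t)$ and $q_x(x_j,t)$), so the real care is in matching conventions and tracking the algebra of the Cramer's-rule determinants so that the stated closed forms~\eqref{eqn:Solution.m.F+}--\eqref{eqn:Solution.m.Delta} emerge cleanly, including the $\re^{-i\lambda}$ prefactor on $F_m^-$ in~\eqref{eqn:Solution.m.q}. A secondary point worth stating explicitly is why the contour-deformation/Jordan's-lemma step is legitimate for finite $m$: one needs that the zeros of $\Delta_m$ are confined to a horizontal strip (so that $R$ can be chosen with all zeros exterior to $D_R^\pm$) and that the relevant determinant ratios are bounded on $\overline{D_R^\pm}$, which for the finite sum $\Delta_m$ is elementary — indeed $\Delta_m$ is a finite exponential sum dominated by its $j=0$ term $b_m^0\cos\lambda$ when $|\Im\lambda|$ is large, so no delicate bounded-variation argument is needed here; one simply invokes the analogue of lemmata~\ref{lem:Zeros} and~\ref{lem:Decay} in the trivial finitely-supported-measure case, or cites the corresponding statements in~\cite{PS2015b} directly.
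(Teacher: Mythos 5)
Your proposal is correct, and the conclusion matches the paper's, but the route through the Dirichlet-to-Neumann map is not quite the one the paper uses for this proposition. The paper's proof is literally a citation: proposition~\ref{prop:IMVP.Solution} is obtained as ``a direct application of~\cite{PS2015b}'', meaning the rank-$(2m+2)$ linear system of that paper, built from $m$ separate global relations on the adjacent subintervals $y=j/m$, $z=(j+1)/m$, their images under $\lambda\mapsto-\lambda$, and the transformed boundary and multipoint conditions, solved via~\cite[lemma~5.1]{PS2015b} and then simplified by cancelling the common factor $2i\lambda$. What you describe instead --- evaluating~\eqref{eqn:GR} at $y=j/m$, $z=1$ and summing against the weights $b_m^j$ to get a single global relation of multipoint type, then closing with the two-point global relation and the symmetry $\lambda\mapsto-\lambda$ --- is the rank-$6$ system that the paper presents separately in section~\ref{sec:Multipoint} as its \emph{new, alternative} multipoint Dirichlet-to-Neumann map (equation~\eqref{eqn:GR.multipoint}), precisely because it is not what~\cite{PS2015b} does. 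Both systems are solvable and yield the same determinant ratios, so your argument is sound; indeed the paper itself observes that the rank-$6$ system ``is the same system as was studied in section~\ref{ssec:NonlocalDtoNMap}'' and is more efficient since its rank is uniform in $m$, whereas the cited route requires solving a system whose rank grows with $m$. Your supporting observations are also fine: for the finitely supported measure $K_m$ the zero-locus and boundedness statements analogous to lemmata~\ref{lem:Zeros} and~\ref{lem:Decay} are elementary, since $\Delta_m$ is a finite exponential sum whose $j=0$ term dominates as $\lvert\Im\lambda\rvert\to\infty$ provided $b_m^0\neq0$, which is the multipoint analogue of the hypothesis $K(0)\neq0$. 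The only caution is presentational: if you claim to be ``specialising the machinery of~\cite{PS2015b}'' you should either actually invoke that paper's subinterval global relations or acknowledge that you are substituting the nonlocal-type system of section~\ref{ssec:NonlocalDtoNMap} in its place.
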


\begin{rmk} \label{rmk:LimitProblems}
	Understanding the sums over $j$ as Riemann sums and taking the limit as $m\to\infty$, solution representation~\eqref{eqn:Solution.m} approaches solution representation~\eqref{eqn:Solution}. In light of this, one might ask why the direct proof of theorem~\ref{thm:Solution} was given.

	The limit $m\to\infty$ is taken in a na\"{i}ve sense. Without the a priori proof of uniqueness, the $m\to\infty$ argument does not yield any uniqueness result. Moreover, it is not even obvious that $q$ defined by the limit should satisfy the original problem. Of course, the argument of section~\ref{sec:Exist} could be applied to the definition of $q$ obtained through the limit $m\to\infty$ (it is, after all, the same definition!) to obtain an existence and solution representation result. But then uniqueness must be determined through an alternative method. Moreover, in evaluating the relative merits of the two approaches, it should not be ignored that using the approach of~\cite{PS2015b} to derive the result of proposition~\ref{prop:IMVP.Solution} and then taking the limit consumes time comparable to the time taken to derive the solution representation via the new nonlocal method. We therefore conclude that the new approach is superior.
\end{rmk}

\subsection{An initial-boundary value problem as a limit of initial-nonlocal value problems}

A classical initial-boundary value problem may be considered as the limit of initial-nonlocal problems as $K$ approaches a $\delta$ function at $0$.
The caveats regarding such a limit are similar to those discussed in remark~\ref{rmk:LimitProblems}, but we proceed with such a formal calculation regardless.

Suppose that in problem~\eqref{eqn:INVP},
\BES
	K(x)=K_j(x)=\begin{cases}j & \mbox{if }0\leq x \leq\frac{1}{j}, \\ 0 & \mbox{otherwise,} \end{cases}
\EES
and study the limit $j\to\infty$. The limit of nonlocal condition~\eqref{eqn:INVP.NC} is the boundary condition $q(0,t)=g_0(t)$, so we expect to recover the solution of the classical initial-boundary value problem. Indeed, defining $\zeta^\pm_j$, $H_j$ and $\Delta_j$ to depend upon $K_j$, we find
\begin{subequations}
\begin{align}
	\Delta_j(\lambda) &= \cos(\lambda) - \frac{\lambda}{2j}\sin(\lambda) + \mathcal{O}\left(j^{-2}\right), \\
	H_j(\lambda;g_0,g_1,\tau) &= i\lambda \re^{-i\lambda} \int_0^\tau \re^{\lambda^2s}g_0(s) \D s + \left[ 1-\frac{i\lambda}{j} + \mathcal{O}\left(j^{-2}\right) \right] \int_0^\tau \re^{\lambda^2s}g_1(s) \D s, \\
	\zeta_j^-(\lambda;q_0) &= i\int_0^1q_0(z)\sin(z\lambda)\D z - \frac{i\lambda}{j} \int_0^1 q_0(z) \cos(z\lambda) \D z + \mathcal{O}\left(j^{-2}\right), \\
	\zeta_j^+(\lambda;q_0) &= \left[ 1-\frac{i\lambda}{2j} \right] \int_0^1\cos([1-z]\lambda)q_0(z)\D z - \frac{1}{2j}\cos(\lambda)q_0(0) + \mathcal{O}\left(j^{-2}\right),
\end{align}
\end{subequations}
in which the $\mathcal{O}(1)$ terms match the initial-boundary value problem.

\subsection{Alternative Dirichlet-to-Neumann map for multipoint problems}

The new implementation of the Fokas transform method makes it unnecesary to study nonlocal problems via limits of the multipoint Fokas transform method of~\cite{PS2015b}. However, by taking an alternative approach to the Dirichlet-to-Neumann map, informed by the nonlocal Dirichlet-to-Neumann map derived in section~\ref{sec:Unique} of the present work, it is possible to rederive some of the results of~\cite{PS2015b} through an alternative and more efficient argument. In this section, we illustrate this argument for the specific example of problem~\eqref{eqn:IMVP}.

\subsubsection*{Stage 1}

Stage 1 of the Fokas transform method is independent of the boundary / multipoint / nonlocal conditions, so it proceeds as described in section~\ref{ssec:FTM.FiniteHeat}. We obtain the Ehrenpreis form~\eqref{eqn:Ehrenpreis} for any $R>0$ and the global relation~\eqref{eqn:GR} for any $\tau\in[0,T]$ and any $y,z$ satisfying $0 \leq y \leq z \leq 1$.

\subsubsection*{Stage 2}

As in the nonlocal problem, we must use the boundary condition~\eqref{eqn:IMVP.BC} and multipoint condition~\eqref{eqn:IMVP.MC} to find expressions for the spectral functions
\begin{align*}
	&f_0(\lambda;0), & &f_1(\lambda;0), & &f_0(\lambda;1), & &f_1(\lambda;1).
\end{align*}
The boundary condition~\eqref{eqn:IMVP.BC} implies that
\BE \label{eqn:Multipoint.BC.transformed}
	f_1(\lambda;1)=h_1(\lambda),
\EE
for $h_1$ defined by equation~\eqref{eqn:h1.defn} in terms of the datum $g_1$, and
\BE \label{eqn:Multipoint.MC.transformed}
	\int_0^1 K_m(y)f_0(\lambda;y) \D y = i\lambda \int_0^t \re^{\lambda^2s} g_0(s) \D s =: h_0(\lambda).
\EE

In~\cite{PS2015b}, Sheils' implementation of the Dirichlet-to-Neumann map for interface problems~\cite{DPS2014a} was adapted to construct a Dirichlet-to-Neumann map for multipoint problems. Specifically, a global relation was derived for each interval $[j/m,(j+1)/m]$, by setting $y=j/m$, $z=(j+1)/m$ in equation~\eqref{eqn:GR}, for each $j=0,1,\ldots,m-1$. This provided $m$ equations in the $2m+2$ spectral functions $f_k(\lambda;j/m)$ for $j=0,1,\ldots,m$ and $k=0,1$. As usual, the spectral functions have very simple symmetry, allowing the map $\lambda\mapsto-\lambda$ to produce a further $m$ linearly independent equations in the same spectral functions. The $t$-transformed boundary condition~\eqref{eqn:Multipoint.BC.transformed} and multipoint condition~\eqref{eqn:Multipoint.MC.transformed} complete a linear system of rank $2m+2$. Some work was done to rearrange the system so that it may be solved directly and relatively easily for the expressions that appear in the Ehrenpreis form~\eqref{eqn:Ehrenpreis},
$$
	\left[f_0(\lambda;0) + f_1(\lambda;0)\right]
	\qquad\qquad\mbox{and}\qquad\qquad
	\re^{-i\lambda}\left[f_0(\lambda;1) + f_1(\lambda;1)\right],
$$
without solving the full system. However, it is still necessary to solve for two entries in a rank-$(m+2)$ system.
The linear system was solved for general $m$ in~\cite[lemma~5.1]{PS2015b}.

In place of the $m$ global relations, we prefer to use two global relation equations: the global relation of two-point type~\eqref{eqn:GR.usual} and the \emph{global relation of multipoint type}
\begin{multline} \label{eqn:GR.multipoint}
	\int_0^1 K_m(y) f_1(\lambda;y) \D y - f_0(\lambda;1) \re^{-i\lambda} \int_0^1 \re^{i\lambda y} K_m(y) \D y =
	-h_0(\lambda) + h_1(\lambda)\re^{-i\lambda}\int_0^1 \re^{i\lambda y} K_m(y) \D y +{} \\
	\int_0^1 \re^{i\lambda y} K_m(y)\hat{q}_0(\lambda;y,1)\D y - \re^{\lambda^2 t}\int_0^1 \re^{i\lambda y} K_m(y)\hat{q}(\lambda,t;y,1)\D y,
\end{multline}
which was obtained from equation~\eqref{eqn:GR} by evaluating at $\tau=t$, $z=1$, multiplying by $\re^{i\lambda y}K_m(y)$ and integrating in $y$ from $0$ to $1$. This is exactly the same as the global relation of nonlocal type~\eqref{eqn:GR.nonlocal}, except that, because of the definition of $K_m$ as a sum of $\delta$-functions, each integral in~\eqref{eqn:GR.multipoint} represents a finite sum. Under the map $\lambda\mapsto-\lambda$, we obtain a further two linear equations in the four unknown spectral functions and the spectral integrals
\BES
	\int_0^1 K_m(y) f_0(\lambda;y) \D y \qquad\qquad\mbox{and}\qquad\qquad \int_0^1 K_m(y) f_1(\lambda;y) \D y.
\EES
Together with the transformed boundary condition~\eqref{eqn:Multipoint.BC.transformed} and mutlipoint condition~\eqref{eqn:Multipoint.MC.transformed}, this specifies a linear system of rank 6, uniformly in $m$. Moreover, as this is the same system as was studied in section~\ref{ssec:NonlocalDtoNMap}, it can be solved in the same way.

\begin{rmk} \label{rmk:MultipointSmallerDtoNMap}
	In~\cite{PS2015b}, the general second and third order $(m+1)$-point Dirichlet-to-Neumann maps were solved explicitly. Although, for higher $n$\Th spatial order problems, the multipoint Dirichlet-to-Neumann map was set up as a rank $n(m+1)$ linear system, the system was not solved explicitly for $n>3$; it is not difficult to see how the approach for $n=3$ can be generalised, but the notation necessarily becomes cumbersome. A generalisation of the new approach (to admit partial differential equations~\eqref{eqn:GeneralPDE} of spatial order $n$, together with $n$ multipoint conditions, instead of just one and a boundary condition) yields a linear system of rank $n(n+1)$, uniformly in $m$. This may be more tractible than the system whose rank grows linearly with $m$, at least for $n\ll m$.
\end{rmk}

\section{General nonlocal problems for the heat equation} \label{sec:General}

Consider the initial-nonlocal value problem
\begin{subequations} \label{eqn:GINVP}
\begin{align} \label{eqn:GINVP.PDE}
	[\partial_t-\partial_x^2]q(x,t) &= 0 & (x,t) &\in (0,1)\times(0,T), \\ \label{eqn:GINVP.IC}
	q(x,0) &= q_0(x) & x &\in[0,1], \\ \label{eqn:GINVP.NC0}
	\int_0^1 K(x)q(x,t) \D x &= g_0(t) & t &\in [0,T], \\ \label{eqn:GINVP.NC1}
	\int_0^1 L(x)q(x,t) \D x &= g_1(t) & t &\in [0,T],
\end{align}
\end{subequations}
where the data $g_0,g_1,q_0$ are sufficiently smooth.

Provided problem~\eqref{eqn:GINVP} is well-posed, the Fokas transform method may provide a means of solution. Stage~1 is independent of the nonlocal conditions, so it proceeds as described in section~\ref{ssec:FTM.FiniteHeat}. In stage~2, we must construct a Dirichlet-to-Neumann map in the form of a linear system that may be solved for the sums of spectral functions
\begin{align*}
	&f_0(\lambda;0), & &f_1(\lambda;0), & &f_0(\lambda;1), & &f_1(\lambda;1).
\end{align*}
We use three evaluations of the global relation~\eqref{eqn:GR}. One evaluation, $y=0$ $z=1$, yields the usual global relation of two-point form~\eqref{eqn:GR.usual}. We construct two global relations of nonlocal type by evaluating equation~\eqref{eqn:GR} at $y=0$, multiplying by either $\re^{i\lambda z} K(z)$ or $\re^{i\lambda z} L(z)$, and integrating from $0$ to $1$:
\begin{multline*}
	\left[ f_0(\lambda;0)+f_1(\lambda;0) \right] \int_0^1 \re^{i\lambda z}K(z)\D z - \int_0^1 K(z)f_1(\lambda;z)\D z \\
	= h_0(\lambda) + \int_0^1 \re^{i\lambda z} K(z)\hat{q}_0(\lambda;0,z)\D z - \re^{\lambda^2t}\int_0^1 \re^{i\lambda z} K(z)\hat{q}(\lambda,t;0,z)\D z,
\end{multline*}
and
\begin{multline*}
	\left[ f_0(\lambda;0)+f_1(\lambda;0) \right] \int_0^1 \re^{i\lambda z}L(z)\D z - \int_0^1 L(z)f_1(\lambda;z)\D z \\
	= h_1(\lambda) + \int_0^1 \re^{i\lambda z} L(z)\hat{q}_0(\lambda;0,z)\D z - \re^{\lambda^2t}\int_0^1 \re^{i\lambda z} L(z)\hat{q}(\lambda,t;0,z)\D z,
\end{multline*}
where
\BES
	h_j(\lambda) := i\lambda\int_0^t\re^{-\lambda^2s}g_j(s)\D s
\EES
are data.
Provided, for some $x$,
\BE \label{eqn:General.FullRankCondition}
	\int_0^1 K(1-z)L(x-z)\D z \neq \int_0^1 L(1-z)K(x-z)\D z,
\EE
applying the maps $\lambda\mapsto\lambda$ and $\lambda\mapsto-\lambda$ yields a full rank system of four equations, so we can solve it in particular for $[f_0(\lambda;0)+f_1(\lambda;0)]$. Applying the global relation of two-point type yields an expression for $\re^{-i\lambda}[f_0(\lambda;1)+f_1(\lambda;1)]$.

Note that problem~\eqref{eqn:GINVP} includes as special cases not only all initial-nonlocal value problems for the heat equation, but also all initial-multipoint value problems, hence all initial-boundary value problems for the heat equation. Indeed, for an arbitrary partition
\BES
	0=\eta_0<\eta_1<\ldots<\eta_m=1,
\EES
we may compose $K$ and $L$ from parts
\begin{align*}
	K &= K_0 + K_1 + K_c, & L &= L_0 + L_1 + L_c, \\
\intertext{where}
	K_0(x) &= \sum_{j=0}^m \M{k}{0}{j}\delta\left(x-\eta_j\right), & L_0(x) &= \sum_{j=0}^m \M{\ell}{0}{j}\delta\left(x-\eta_j\right), \\
	K_1(x) &= \sum_{j=0}^m \M{k}{1}{j}\delta'\left(x-\eta_j\right), & L_1(x) &= \sum_{j=0}^m \M{\ell}{1}{j}\delta'\left(x-\eta_j\right),
\end{align*}
and the remainders $K_c$, $L_c$ are sufficiently smooth that lemmata~\ref{lem:Zeros} and~\ref{lem:Decay} may be appropriately adapted.

\appendix

\section{Proofs of lemmata~\ref{lem:Zeros} and~\ref{lem:Decay}} \label{sec:AppProofs}

The first proof follows the presentation by Langer~\cite[theorem~14]{Lan1931a}, though Langer recognises Cartwright's earlier attribution of the result to Hardy.

\begin{proof}[Proof of lemma~\ref{lem:Zeros}]
	We define
	\BES
		\psi(y) = \begin{cases} \frac{1}{2}k(y) & 0\leq y\leq1, \\ \frac{1}{2}k(-y) & -1\leq y<0, \end{cases}
	\EES
	so that
	\begin{align*}
		\Delta(\lambda) &= \int_{-1}^1\psi(y)\re^{i\lambda y} \D y, \\
		V_{b-\delta}^{b}(\psi) &= \max\{|\psi(y_2)-\psi(y_1)|:b-\delta \leq y_1<y_2 \leq b\} = V_{b-\delta}^{b}(k).
	\end{align*}

	The function $\psi$ is continuous at the endpoints of its support $\pm b$ and is of bounded variation, with $V_{-1}^{1}(\psi)=V_{0}^{1}(k)$.
	Therefore
	\BE \label{eqn:zeroslem.est0}
		i \lambda \Delta(\lambda) = \psi(b) \re^{i\lambda b} - \psi(-b) \re^{-i\lambda b} - \int_{-b}^{b-\delta} \re^{i\lambda y} \D\psi(y) - \int_{b-\delta}^b \re^{i\lambda y} \D\psi(y).
	\EE
	Suppose $\lambda=\zeta-i\theta$ for some $\theta>0$.
	Then
	\begin{align*}
		\left| \int_{-b}^{b-\delta} \re^{i\lambda y} \D\psi(y) \right| &\leq V_{-1}^{1}(\psi) \re^{\theta(b-\delta)}, \\
		\left| \int_{b-\delta}^b \re^{i\lambda y} \D\psi(y) \right| &\leq V_{b-\delta}^{b}(\psi) \re^{\theta}.
	\end{align*}
	Because $k$ is continuous at $1$, there exists some sufficiently small $\delta_0$ satisfying the definition in the statement of the lemma.
	In particular, for $\delta=\delta_0$, and $\theta>M$ we obtain
	\begin{align} \label{eqn:zeroslem.est1}
		\left| \int_{-b}^{b-\delta_0} \re^{i\lambda y} \D\psi(y) \right| &\leq \frac{|\psi(b)|}{4} \re^{\theta}, \\ \label{eqn:zeroslem.est2}
		\left| \int_{b-\delta_0}^b \re^{i\lambda y} \D\psi(y) \right| &< \frac{|\psi(b)|}{4} \re^{\theta}, \\ \label{eqn:zeroslem.est3}
		\re^{-2b\theta} &\leq \frac{1}{4}.
	\end{align}
	Combining estimates~\eqref{eqn:zeroslem.est1}--\eqref{eqn:zeroslem.est3}, and observing $\psi(-b)=\psi(b)$, we obtain from equation~\eqref{eqn:zeroslem.est0}
	\BE \label{eqn:EstimateOfModLaDeltaOutsideStrip}
		|\lambda \Delta(\lambda)| > |\psi(b)|\re^{b\theta}\left( 1 - \frac{1}{4} - \frac{1}{4} - \frac{1}{4} \right).
	\EE

	We have shown that $\Delta$ has no zeros with imaginary part less than $-M$.
	The argument for imaginary part greater than $M$ is analogous.
	The factor $\sqrt{2}$ in equation $R=\sqrt{2}M$ appears due to the angle, $\pi/4$, which the ray components of $\partial D_R^\pm$ make with the horizontal strip $|\Im(\lambda)|<M$.
\end{proof}

\begin{proof}[Proof of lemma~\ref{lem:Decay}]
	Despite not being expressed in exactly this way, the argument in the proof of lemma~\ref{lem:Zeros} up to estimate~\eqref{eqn:EstimateOfModLaDeltaOutsideStrip} may be understood as a proof that
	\BE \label{eqn:Delta.asymp}
		\frac{1}{\Delta(\la)} = \mathcal{O}\left(\lvert\la\rvert\re^{-b\lvert\Im(\la)\rvert}\right) \qquad\mbox{as } \Im(\la)\to\pm\infty.
	\EE
	Therefore the same estimate holds as $\la\to\infty$ from within $\overline{D^\pm}$.

	We integrate by parts in the inner integral in the definition of $\zeta^-$:
	\begin{align*}
		\left\lvert \zeta^-(\la;\phi)\right\rvert &= \left\lvert \frac{1}{\la} \int_0^1K(y) \left( - \cos[(1-y)\la] \phi(1) + \phi(y) + \int_y^1 \cos[(z-y)\la]\phi'(z) \D z \right) \D y \right\rvert \\
		&\leq A(\la) + B(\la) + C(\la),
	\end{align*}
	where
	\begin{align*}
		A(\la) &= \left\lvert \frac{\phi(1)}{\la} \int_0^1 K(y)\cos[(1-y)\la] \D y \right\rvert, \\
		B(\la) &= \left\lvert \frac{1}{\la} \int_0^1 K(y)\phi(y) \D y \right\rvert, \\
		C(\la) &= \left\lvert \frac{1}{\la} \int_0^1 K(y) \int_y^1 \cos[(z-y)\la]\phi'(z) \D z \D y \right\rvert.
	\end{align*}
	We will show that each of the quantities $A(\la),B(\la),C(\la)$ are dominated by $\Delta(\la)$
	\begin{align*}
		\frac{A(\la)}{\lvert\Delta(\la)\rvert} &= \frac{\lvert \phi(1)\rvert}{\lvert\la\rvert} \left\lvert \frac{\Delta}{\Delta} \right\rvert = \mathcal{O}\left(\lvert\la\rvert^{-1}\right), \\
		\frac{B(\la)}{\lvert\Delta(\la)\rvert} &= \mathcal{O}\left(\re^{-b\lvert\Im(\la)\rvert}\right) = \mathcal{O}\left(\lvert\la\rvert^{-1}\right).
	\end{align*}
	Using the supremum norm of $\phi'$, we bound the magnitude of the inner integral of $C(\la)$:
	\begin{align*}
		\left\lvert \int_y^1 \cos[(z-y)\la]\phi'(z) \D z \right\rvert &\leq \frac{1}{2} \int_{y}^1 \left[ \re^{\lvert\Im(\la)\rvert(z-y)} + \re^{-\lvert\Im(\la)\rvert(z-y)} \right] \left\lvert \phi'(z) \right\rvert \D z \\
		&\leq \frac{\lVert\phi'\rVert_\infty}{2\lvert\Im(\la)\rvert} \left( \re^{\lvert\Im(\la)\rvert(1-y)} - \re^{-\lvert\Im(\la)\rvert(1-y)} \right) \\
		&\leq \frac{\lVert\phi'\rVert_\infty \re^{\lvert\Im(\la)\rvert(1-y)}}{\lvert\Im(\la)\rvert}.
	\end{align*}
	We define the function $J:[0,2]\to\R$ to be the zero extension of the absolute value of $K$.
	Then $J$ is also a function of bounded variation.
	It follows that
	\begin{align*}
		\frac{C(\la)}{\lvert\Delta(\la)\rvert} &= \frac{\lVert\phi'\rVert_\infty \re^{\lvert\Im(\la)\rvert}}{\lvert\la\Im(\la)\rvert} \left\lvert \int_0^1 K(y) \re^{-\lvert\Im(\la)\rvert y} \D y \right\rvert \mathcal{O}\left(\lvert\la\rvert\re^{-b\lvert\Im(\la)\rvert}\right) \\
		&= \int_{1-b}^2 J(y) \re^{-\lvert\Im(\la)\rvert y} \D y \;\mathcal{O}\left(\lvert\la\rvert^{-1}\re^{(1-b)\lvert\Im(\la)\rvert}\right) \\
		&= \left( \frac{\lvert K(1-b) \rvert \re^{(1-b)\lvert\Im(\la)\rvert}}{\lvert\Im(\la)\rvert} + \frac{1}{\lvert\Im(\la)\rvert} \left\lvert \int_{1-b}^2 \re^{-\lvert\Im(\la)\rvert y} \D J(y) \right\rvert \right) \mathcal{O}\left(\lvert\la\rvert^{-1}\re^{(1-b)\lvert\Im(\la)\rvert}\right) \\
		&= \mathcal{O}\left(\lvert\la\rvert^{-2}\right)
	\end{align*}
	Hence asymptotic formula~\eqref{eqn:DecayLem.-} holds.

	We follow a similar argument to obtain the asymptotic formula for the ratio $\zeta^+(\la)/\Delta(\la)$.
	Integrating by parts in the inner integrals of the definition of $\zeta^+$,
	\BES
		\left\lvert \zeta^+(\la;\phi) \right\rvert \leq E + F + G + P + Q,
	\EES
	where
	\begin{align*}
		E &= \left\lvert \frac{1}{\la} \int_0^1 K(y) \cos([1-y]\la)\re^{-i\la y} \phi(y)\D y \right\rvert, \\
		F &= \left\lvert \frac{\phi(0)}{\la} \int_0^1 K(y)\cos[(1-y)\la] \D y \right\rvert, \\
		G &= \left\lvert \frac{1}{\la} \int_0^1 K(y)\cos[(1-y)\la] \int_0^y \re^{-i\la z}\phi'(z) \D z \D y \right\rvert, \\
		P &= \left\lvert \frac{1}{\la} \int_0^1 K(y) \sin([1-y]\la)\re^{-i\la y} \phi(y)\D y \right\rvert, \\
		Q &= \left\lvert \frac{1}{\la} \int_0^1 K(y)\re^{-i\la y} \int_y^1 \sin[(1-z)\la]\phi'(z) \D z \D y \right\rvert.
	\end{align*}
	Certainly $F(\la)/\lvert\Delta(\la)\rvert=\mathcal{O}(\lvert\la\rvert^{-1})$, by the same argument as for $A(\la)/\lvert\Delta(\la)\rvert$ above.
	We will establish bounds for each of the other terms.
	\BES
		E
		\leq
		\frac{1}{\lvert \la \rvert} \left\lvert \int_0^1 K(y)\phi(y) \frac{1}{2}\left( \re^{i\la(1-2y)} + \re^{-i\la} \right) \D y \right\rvert
		\leq
		\frac{\lVert K\phi \rVert_1}{2\lvert \la \rvert} \sup_{1-b \leq y \leq 1-a} \left\lvert \re^{i\la(1-2y)} + \re^{-i\la} \right\rvert
		\leq
		\frac{\lVert K\phi \rVert_1}{\lvert \la \rvert} \re^{\Im(\la)}.
	\EES
	Therefore, as $\la\to\infty$ from within $\overline{D^+}$,
	\BES
		\frac{E(\la)}{\lvert\Delta(\la)\rvert}=\mathcal{O}(\re^{\Im(\la)(1-b)}),
	\EES
	which is $\mathcal{O}(1)$ provided $b=1$.
	A similar argument yields $P(\la)/\lvert\Delta(\la)\rvert=\mathcal{O}(\re^{\Im(\la)(1-b)})$.
	Using the supremum norm of the inner integral of $G$, we bound the magnitude of the inner integral of $G(\la)$:
	\BES
		\left\lvert \int_0^y \re^{-i\la z}\phi'(z)\D z \right\rvert \leq \lVert \phi' \rVert_\infty \int_0^y \left\lvert \re^{-i\la z} \right\rvert \D z = \frac{\lVert \phi'\rVert_\infty}{\Im(\la)} \re^{\Im(\la)}.
	\EES
	It follows that
	\begin{align*}
		\lvert G(\la) \rvert &\leq \frac{\lVert\phi'\rVert_\infty}{2\Im(\la)\lvert\la\rvert} \int_{1-b}^2 J(y) \left[ \re^{\Im(\la)} + \re^{\Im(\la)(2y-1)} \right] \D y \\
		&\leq \frac{\lVert\phi'\rVert_\infty}{2\Im(\la)\lvert\la\rvert} \left( \re^{\Im(\la)}\lVert K \rVert_\infty + \frac{\re^{-\Im(\la)}}{2\Im(\la)} \left[ \lvert K(1-b) \rvert \re^{2\Im(\la)(1-b)} + \int_{1-b}^2 \re^{2\Im(\la)y} \D J(y) \right] \right) \\
		&= \frac{\lVert\phi'\rVert_\infty}{2\Im(\la)\lvert\la\rvert} \left( \re^{\Im(\la)}\lVert K \rVert_\infty + \mathcal{O}(\lvert\la\rvert^{-1}) \right).
	\end{align*}
	Therefore, provided $b=1$, we have $G(\la)/\lvert\Delta(\la)\rvert = \mathcal{O}(\lvert\la\rvert^{-1})$.
	The argument for $Q(\la)/\lvert\Delta(\la)\rvert = \mathcal{O}(\lvert\la\rvert^{-1})$ is very similar.
	We have established asymptotic formula~\eqref{eqn:DecayLem.+}.
\end{proof}

\section*{Acknowledgments}
Peter D. Miller acknowledges the support of the National Science Foundation under grant DMS-1513054.
The authors wish to thank David Stewart for providing a Chemist's perspective on the proposed experimental method.

\bibliographystyle{amsplain}
\bibliography{dbrefs}

\end{document}